\newtheorem{theorem}{Theorem}[section]
\newtheorem{lemma}[theorem]{Lemma}
\newtheorem{problem}[theorem]{Problem}
\theoremstyle{definition}
\begin{document}
\title{Maxima of the $Q$-index of non-bipartite graphs: forbidden short odd cycles}
\author{{\bf Lu Miao}, ~{\bf Ruifang Liu}\thanks{Corresponding author.
E-mail addresses: miaolu0208@163.com (L. Miao); rfliu@zzu.edu.cn (R. Liu);
jie\_xue@126.com (J. Xue).}, ~{\bf Jie Xue}~\\
{\footnotesize  School of Mathematics and Statistics, Zhengzhou University, Zhengzhou, Henan 450001, China}}

\date{}
\maketitle
{\flushleft\large\bf Abstract}
Let $G$ be a non-bipartite graph which does not contain any odd cycle of length at most $2k+1$.
In this paper, we determine the maximum $Q$-index of $G$ if its order is fixed, and the corresponding extremal graph is uniquely characterized.
Moreover, if the size of $G$ is given, the maximum $Q$-index of $G$ and the unique extremal graph are also proved.

\begin{flushleft}
\textbf{Keywords:} Spectral extrema, $Q$-index, Non-bipartite graph, Odd cycle
\end{flushleft}
\textbf{AMS Classification:} 05C50; 05C35

\section{Introduction}
Let $G$ be a simple graph. Denote by $A(G)$ and $D(G)$ the adjacency matrix and the diagonal degree matrix of $G$, respectively.
The matrix $Q(G)=D(G)+A(G)$ is known as the $Q$-matrix or the signless Laplacian matrix of $G.$
The largest eigenvalue of $Q(G)$, denoted by $q(G)$, is called the $Q$-index or the signless Laplacian spectral radius of $G.$
If $G$ is connected, then cleraly $Q(G)$ is irreducible. By Perron-Frobenius theorem, there
exists a positive unit eigenvector corresponding to $q(G)$,
which is called the Perron vector of $Q(G)$. As usual, we use $n$ and $m$ to denote the order and the size of $G$, respectively.
Recall that $S_{n,k}$ is the join of a clique on $k$ vertices with an independent set of $n-k$ vertices, and $S_{n,k}^{+}$
is obtained by adding an edge within the independent set of $S_{n,k}.$

If a graph does not contain $H$ as a subgraph, then it is said to be \emph{$H$-free}.
Tur\'{a}n-type problem is the problem of determining the largest size of an $H$-free graph of order $n$,
which is a fundamental problem in extremal graph theory. One of the famous results on Tur\'{a}n-type problem is due to Mantel \cite{Mantel} in 1907.

\begin{theorem}[\!\cite{Mantel}]
Every graph of order $n$ with $m>\lfloor \frac{n^{2}}{4}\rfloor$ contains a triangle.
\end{theorem}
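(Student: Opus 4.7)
The plan is to prove the contrapositive: assuming $G$ is triangle-free on $n$ vertices with $m$ edges, I will show $m \le \lfloor n^{2}/4 \rfloor$. The starting observation is that triangle-freeness forbids any two adjacent vertices from sharing a neighbor, so for every edge $uv \in E(G)$ we have $N(u) \cap N(v) = \emptyset$ and consequently $d(u) + d(v) \le n$.

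Next I would sum this inequality over all edges. The left-hand side rearranges neatly via a degree-counting identity: each vertex $v$ contributes $d(v)$ to the sum once for every edge incident to it, so
\[
\sum_{uv \in E(G)} \bigl(d(u) + d(v)\bigr) = \sum_{v \in V(G)} d(v)^{2},
\]
while the right-hand side is at most $nm$. Combining with Cauchy--Schwarz (or equivalently the power-mean inequality) applied to the degree sequence,
\[
\sum_{v \in V(G)} d(v)^{2} \ge \frac{1}{n}\Bigl(\sum_{v \in V(G)} d(v)\Bigr)^{2} = \frac{4m^{2}}{n},
\]
I obtain $4m^{2}/n \le nm$, which simplifies to $m \le n^{2}/4$. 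Since $m$ is an integer, this upgrades to $m \le \lfloor n^{2}/4 \rfloor$, completing the contrapositive.

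There is essentially no serious obstacle here; the only subtlety worth flagging is verifying the degree-sum identity cleanly and noting that the extremal case is tight for the balanced complete bipartite graph $K_{\lfloor n/2 \rfloor, \lceil n/2 \rceil}$, so the bound cannot be improved. An alternative route would be induction on $n$ in steps of two (remove an edge $uv$ together with all edges incident to $\{u,v\}$, of which there are at most $n$, and apply the hypothesis to the remaining $n-2$ vertices), but the edge-counting proof above is shorter and more in the spirit of the spectral methods used later in the paper.
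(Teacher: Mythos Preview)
Your proof is correct and is one of the standard arguments for Mantel's theorem. However, the paper does not give its own proof of this statement: Theorem~1.1 is stated with a citation to \cite{Mantel} and used only as historical background, so there is no proof in the paper to compare against. Your edge-summing plus Cauchy--Schwarz argument stands on its own.
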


The above result is also known as Mantel's theorem.
In other words, Mantel's theorem states that a triangle-free graph has at most $\lfloor\frac{n^{2}}{4}\rfloor$ edges.
Obviously, a bipartite graph is triangle-free, hence it contains at most $\lfloor\frac{n^{2}}{4}\rfloor$ edges.
For non-bipartite graphs, Erd\H{o}s \cite{Bondy2008} improved Mantel's theorem by showing that every non-bipartite triangle-free graph of order $n$ satisfies $m\leq\lfloor\frac{(n-1)^{2}}{4}\rfloor+1.$

In \cite{V5}, Nikiforov formally proposed a spectral version of Tur\'{a}n-type problem as follows.

\begin{problem}[\!\cite{V5}]\label{p1}
What is the maximum spectral radius of an $H$-free graph of order $n$ or size $m$?
\end{problem}

Problem \ref{p1} has been studied in a series of papers (see, for example, \cite{ZLS,LZZ,CFTZ,CS,LP,DKL}).
In 2007, Nikiforov \cite{Nikiforov2007} showed that for $C_4$-free graphs with odd order $n$, the friendship graph is the unique extremal graph with maximum spectral radius. If the order $n$ is even, the extremal graph was determined by Zhai and Wang \cite{ZW}.
For general even cycle, Nikiforov \cite{V5} conjectured that $S_{n,k}^{+}$ is the unique graph with maximum spectral radius among all $C_{2k+2}$-free graphs.
Zhai and Lin \cite{ZL} confirmed the conjecture for $k=2$. Recently, this conjecture was completely solved by Cioab\u{a}, Desai and Tait \cite{CDM}.
Among all non-bipartite $C_{3}$-free graphs with fixed order $n$, the maximum spectral radius was determined by Lin, Ning and Wu \cite{Lin2021}.
For non-bipartite $C_{3}$-free graphs with given size $m$, the maximum spectral radius was improved by Zhai and Shu \cite{Zhai2022}.
As extensions, the maximum spectral radius of $\{C_{3},C_{5},\ldots,C_{2k+1}\}$-free non-bipartite graphs of order $n$ was determined independently
by Lin and Guo \cite{LG2021} and Li, Sun and Yu \cite{LSY}.
Furthermore, Li and Peng \cite{LP2} presented the maximum spectral radius of $\{C_{3},C_{5}\}$-free non-bipartite graphs with given size $m.$
Recently, Lou, Lu and Huang \cite{LLH} determined the maximum spectral radius of $\{C_{3},C_{5},\ldots,C_{2k+1}\}$-free non-bipartite graphs of size $m$
and confirmed a conjecture in \cite{LP2}.

In view of different extremal graphs from the spectral radius in many cases, the $Q$-spectral analogue of Problem \ref{p1} was proposed by Freitas, Nikiforov and Patuzzi \cite{DF2013}.

\begin{problem}[\!\cite{DF2013}]\label{p2}
What is the maximum $Q$-index among all the $H$-free graphs with given order $n$ or size $m$?
\end{problem}

Problem \ref{p2} has been investigated for some special graphs $H$ related to cycles. In \cite{DF2016}, Freitas, Nikiforov and Patuzzi determined the extremal graph with maximum $Q$-index among all $K_{2,s+1}$-free graphs of order $n$. Recently, Zhao, Huang and Guo \cite{Zhao2021} showed that $S_{n,k}$ is the unique graph
attaining the maximum $Q$-index among all $F_{k}$-free graphs of order $n$.
As a generalization of friendship graph $F_{k}$, Chen, Liu and Zhang \cite{Chen20} investigated the maximum $Q$-index of $F_{a_1, \ldots, a_k}$-free graphs of order $n$,
where $F_{a_1, \ldots, a_k}$ is a graph consisting of $k$ cycles of odd length $2a_1+1, \ldots, 2a_k+1$ intersecting in exactly a common vertex.
For $C_{2k+2}$-free graphs of order $n$, Nikiforov and Yuan \cite{Nikiforov2015} proved that $S_{n,k}^{+}$ is the
unique graph with maximum $Q$-index if $n\geq400k^{2}$. Among all $C_{2k+1}$-free graphs of order $n$,
Yuan \cite{Yuan2014} showed that $S_{n,k}$ is the extremal graph with maximum $Q$-index. Very recently,
The maximum $Q$-index of non-bipartite triangle-free graphs with fixed order $n$ or size $m$ was determined in \cite{LMX}, respectively.

Inspired by the above results, we consider the maximum $Q$-index of $\{C_{3},C_{5},\ldots,C_{2k+1}\}$-free non-bipartite graphs
when the order $n$ or the size $m$ is fixed, respectively. Let $H$ be a graph with vertex set $V(H)=\{v_{1},v_{2},\ldots,v_{k}\}$ and $r=(n_{1},n_{2},\ldots,n_{k})$ be a vector of positive integers. A blow-up of $H$, denoted by $H\circ r$, is the graph obtained from $H$ by replacing each vertex $v_{i}$
with an independent set $V_{i}$ of $n_{i}$ vertices and joining each vertex in $V_{i}$ with each vertex in $V_{j}$ for $v_{i}v_{j}\in E(H)$.
The graph $C_{2k+3}\circ(n-2k-2,1,\ldots,1)$ (see Fig. \ref{1}) is a blow-up of $C_{2k+3}$, where $V(C_{2k+3})=\{v_{1},v_{2},\ldots,v_{2k+3}\}$ and $r=(n-2k-2,1,\ldots,1)$. The first main result of this paper presents the maximum $Q$-index of a non-bipartite graph with given order $n$,
if it does not contains any odd cycle of length at most $2k+1$.

\input{f1.TpX}

\begin{theorem}\label{main1}
If $k\geq2$ and $G$ is a $\{C_{3},C_{5},\ldots,C_{2k+1}\}$-free non-bipartite graph of order $n\geq 2k+3$, then
\begin{eqnarray*}
q(G)\leq q(C_{2k+3}\circ(n-2k-2,1,\ldots,1)).
\end{eqnarray*}
Equality holds if and only if $G\cong C_{2k+3}\circ(n-2k-2,1,\ldots,1)$.
\end{theorem}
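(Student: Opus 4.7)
The plan is to start with an extremal graph $G$ in the class of $\{C_3,C_5,\ldots,C_{2k+1}\}$-free non-bipartite graphs of order $n$ and progressively pin down its structure. Without loss of generality assume $G$ is connected, and let $x>0$ denote the Perron eigenvector of $Q(G)$ with corresponding eigenvalue $q=q(G)$. Because $G$ is non-bipartite and contains no odd cycle of length $\le 2k+1$, its odd girth is at least $2k+3$, so $G$ contains a shortest odd cycle $C=v_1v_2\cdots v_\ell v_1$ with $\ell\ge 2k+3$; the ultimate goal is to force $\ell=2k+3$ and recover the blow-up structure of Figure~\ref{1}.

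The first structural ingredient is a neighborhood-on-$C$ lemma: for any $u\notin V(C)$, any two of its neighbors $v_i,v_j$ on $C$ yield two cycles through $u$ of lengths $d+2$ and $\ell-d+2$ where $d=d_C(v_i,v_j)$. Since their sum $\ell+4$ is odd, exactly one is odd, and that odd cycle must have length $\ge 2k+3$. A short parity case analysis (using $k\ge 2$ and $\ell\ge 2k+3$), together with the observation that three points pairwise at $C$-distance two cannot exist on a cycle of length $\ge 7$, forces $\ell=2k+3$, $|N(u)\cap V(C)|\le 2$, and $N(u)\cap V(C)\in\{\emptyset\}\cup\{\{v_{s-1},v_{s+1}\}:1\le s\le 2k+3\}$. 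Setting $W_s=\{u\notin V(C):N(u)\cap V(C)=\{v_{s-1},v_{s+1}\}\}$ and $W_0=\{u\notin V(C):N(u)\cap V(C)=\emptyset\}$, the remaining tasks are to show $W_0=\emptyset$, that $V(G)\setminus V(C)$ is independent, and that exactly one $W_s$ is nonempty.

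The hard part will be emptying $W_0$ and eliminating edges inside $V(G)\setminus V(C)$. For $W_0=\emptyset$: if $u\in W_0$, then by connectedness $u$ has neighbors outside $V(C)$; a Perron-vector-guided edge-swap---delete $u$'s incident edges and reconnect $u$ to the pair $\{v_{s^*-1},v_{s^*+1}\}$ of $C$-vertices with maximum Perron-entry sum---keeps the graph $\{C_3,\ldots,C_{2k+1}\}$-free by the lemma above, and a Rayleigh-quotient comparison, aided by the fact that Perron entries of $C$-vertices dominate those of deeper vertices in an extremal graph, yields $q(G')>q(G)$, contradicting extremality. For internal edges: if $uw\in E$ with $u,w\in W_s$, then $v_{s-1}uwv_{s+1}v_sv_{s-1}$ is a $C_5$, forbidden since $k\ge 2$; cross-$W_s/W_t$ edges with $s\ne t$ are handled in tandem with the next step, by first shifting all non-$C$ vertices to a single $W_{s^*}$ and then invoking the $s=t$ case. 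One subtle point is that the four length-three paths $v_auwv_b$ with $v_a\in N(u)\cap V(C)$ and $v_b\in N(w)\cap V(C)$ do not always close into a forbidden odd cycle, so the Perron argument is essential in the residual configurations.

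Once $V(G)\setminus V(C)$ is independent, let $s^*$ maximize $x_{v_{s^*-1}}+x_{v_{s^*+1}}$. For any $u\in W_s$ with $s\ne s^*$, rerouting $uv_{s-1},uv_{s+1}$ to $uv_{s^*-1},uv_{s^*+1}$ keeps the graph in the forbidden class and weakly increases the Rayleigh quotient $x^{T}Qx$; the extremality of $G$ together with the positivity of $x$ then forces all outside vertices into $W_{s^*}$, and relabeling $s^*=1$ identifies $G$ with $C_{2k+3}\circ(n-2k-2,1,\ldots,1)$. I expect the most delicate step to be the simultaneous treatment of $W_0$ and the cross-$W_s/W_t$ edges, since there the gain in the Rayleigh quotient is controlled by Perron-entry comparisons that rely on quantitative estimates specific to the conjectural extremal blow-up rather than on combinatorial forbidden-cycle arguments alone.
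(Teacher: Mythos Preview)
Your approach differs substantially from the paper's and has a concrete gap at the very first structural step. The neighborhood-on-$C$ lemma you state correctly shows that any $u\notin V(C)$ with at least two $C$-neighbors has them pairwise at $C$-distance $2$, hence $|N(u)\cap V(C)|\le2$; but nothing in that parity analysis bounds $\ell$ from above. With $a=2$ the odd cycle through $u$ has length $\ell-a+2=\ell$, so the forbidden-cycle hypothesis only reproduces $\ell\ge2k+3$. A priori the extremal $G$ could have odd girth $\ell=2k+5$ with every outside vertex attached to some pair $\{v_{s-1},v_{s+1}\}$, and no local edge-swap fixes this without changing $n$ or risking a $C_{2k+1}$. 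You also silently assume $|N(u)\cap V(C)|\neq1$, which the lemma does not give. Both points need separate extremality arguments, and, like the $W_0$ and cross-$W_s/W_t$ steps you already flag as delicate, they hinge on Perron-entry inequalities that you never establish; the claim that ``Perron entries of $C$-vertices dominate those of deeper vertices'' is exactly the kind of statement that requires a quantitative lower bound on $q(G)$ to verify.

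The paper bypasses all of this by working top-down from such a bound. It first proves $q(G)>n-2k+1-\tfrac{3}{2(n-2k+4)}$ (Lemma~\ref{lem6}), then combines this with $q(G)\le\max_{uv\in E}\{d(u)+d(v)\}$ to force $d(u')+d(v')=n-2k+1$ for some edge $u'v'$; since $G$ is $C_3$-free, exactly $2k-1$ vertices lie outside $N(u')\cup N(v')$, and the forbidden-odd-cycle constraints force those to form a path, yielding $G\cong C_{2k+3}\circ(n_1,n_2,n_3,n_4,1,\ldots,1)$ in one step (Lemma~\ref{lem10}). The reduction to $(n-2k-2,1,\ldots,1)$ is then a case split on which vertex realizes $\max\{d(u)+m(u)\}$, together with an explicit characteristic-polynomial comparison (Lemma~\ref{lem11}) ruling out two adjacent blown-up parts. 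Your shortest-odd-cycle framework might be salvageable, but it would almost certainly need the same numerical lower bound on $q$ to close the gaps, and at that point the paper's degree-sum shortcut reaches the blow-up structure far more directly.
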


To identify a vertex $u_{1}$ of $G_{1}$ and a vertex $u_{2}$ of $G_{2}$ is to replace these two vertices by a single vertex
incident to all the edges which were incident to either $u_{1}$ in $G_{1}$ or $u_{2}$ in $G_{2}$. Denote by $C_{2k+3}\bullet K_{1,m-2k-3}$ the graph by identifying a vertex of $C_{2k+3}$ and the central vertex of $K_{1,m-2k-3}$ (see Fig. \ref{2}).
For a non-bipartite graph $G$ with given size $m$, the following result determines the maximum $Q$-index if $G$ contains no any odd cycle of length at most $2k+1.$

\begin{theorem}\label{main2}
Let $k\geq 1$ and $G$ be a $\{C_{3},C_{5},\ldots,C_{2k+1}\}$-free non-bipartite graph of size $m\geq 2k+3$. If $G$ has no isolated vertices,
then
\begin{eqnarray*}
q(G)\leq q(C_{2k+3}\bullet K_{1,m-2k-3}),
\end{eqnarray*}
with equality holds if and only if $G\cong C_{2k+3}\bullet K_{1,m-2k-3}$.
\end{theorem}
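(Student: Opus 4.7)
The plan is to let $G$ denote any extremal graph---a $\{C_{3},C_{5},\ldots,C_{2k+1}\}$-free non-bipartite graph of size $m$ with no isolated vertices and maximum $q(G)$---and to show that $G\cong H^{*}:=C_{2k+3}\bullet K_{1,m-2k-3}$. Write $q:=q(G)$, let $x$ be the unit Perron eigenvector of $Q(G)$, and let $u^{*}\in V(G)$ realize $x^{*}:=x_{u^{*}}=\max_{v}x_{v}$.

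First I would establish quantitative a priori bounds. Since $H^{*}$ contains a star $K_{1,m-2k-1}$ centred at the high-degree vertex, extremality yields $q\ge q(H^{*})\ge q(K_{1,m-2k-1})=m-2k$, so $q=\Theta(m)$. The eigenvalue equation
\[
q\,x^{*}=d(u^{*})\,x^{*}+\sum_{w\sim u^{*}}x_{w},
\]
together with $x_{w}\le x^{*}$, forces $d(u^{*})\ge q/2$; combining it with the pendant identity $x_{v}=x_{w}/(q-1)$ for a leaf $v$ with neighbour $w$ upgrades this to $d(u^{*})=m-O_{k}(1)$. As a by-product, every vertex not in $N(u^{*})\cup\{u^{*}\}$ has a small Perron coordinate (of order $1/q$), and most edges of $G$ are already incident to $u^{*}$.

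Next I would reduce to the connected case and push the remaining edges toward $u^{*}$. If $G$ were disconnected, the component of $u^{*}$ would contain every odd cycle (otherwise another component would have $Q$-index at least $q$, contradicting $x^{*}>x_{v}$ for $v$ outside), and any edge of another component could be replaced by a new pendant at $u^{*}$—this preserves $m$, creates no new cycle, and strictly lifts the Rayleigh quotient $x^{\top}Q(G)x$, contradicting extremality. Once $G$ is connected, I would apply the standard shift lemma: if $uv\notin E(G)$, $vw\in E(G)$ and $x_{u}\ge x_{w}$, then $q(G-vw+uv)\ge q(G)$, strictly so whenever $x_{u}>x_{w}$. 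Taking $u=u^{*}$, I transfer edges from low-$x$ vertices to $u^{*}$ whenever this does not close a forbidden odd cycle of length $\le 2k+1$. The coordinate estimates above imply that the vertices causing inadmissibility lie on short odd walks from $u^{*}$ and are very few, so they can be frozen in place and will form part of the eventual $C_{2k+3}$.

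Finally I would identify the extremal structure. After the reductions the non-pendant part of $G$ is a small core $T\ni u^{*}$, and since $G$ is non-bipartite and $\{C_{3},\ldots,C_{2k+1}\}$-free, $T$ must contain an odd cycle of length at least $2k+3$ through $u^{*}$. Any chord, longer odd cycle, or extraneous edge in $T$ can be removed and re-added as a pendant at $u^{*}$ to strictly increase $q$ within the class, contradicting extremality; so $T$ is exactly a $C_{2k+3}$ and the remaining $m-2k-3$ edges are pendants at $u^{*}$, forcing $G\cong H^{*}$, and tracing the strict inequalities pins the equality case. The main obstacle is the admissibility of the shift operation in the previous stage: transferring an edge to $u^{*}$ may close a short odd cycle and therefore be forbidden, and ruling this out requires sharp Perron-coordinate bounds along odd walks of length $\le 2k$ emanating from $u^{*}$, coupling the spectral lower bound $q\gtrsim m$ with the odd-girth hypothesis; this coupling is the technical heart of the argument.
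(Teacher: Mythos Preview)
Your plan has two real gaps. The first is the ``upgrade'' from $d(u^{*})\ge q/2$ to $d(u^{*})=m-O_{k}(1)$: the pendant identity $x_{v}=x_{w}/(q-1)$ applies only to leaves, and at this stage you have no reason to believe the neighbours of $u^{*}$ are leaves, so there is no mechanism to bootstrap from $d(u^{*})\ge q/2\approx (m-2k)/2$ to $d(u^{*})\approx m$. The second is the shift step: you correctly flag admissibility of the rotation $G-vw+u^{*}v$ as ``the technical heart'', but you do not resolve it---moving an edge to $u^{*}$ can close a $C_{3}$ (or a longer short odd cycle) through $u^{*}$, and asserted Perron-coordinate bounds along short odd walks do not by themselves exclude this. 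Your connectivity reduction is also imprecise: the odd cycle need not lie in the component of $u^{*}$ merely because that component carries the Perron vector, and replacing one of its edges by a pendant at $u^{*}$ could leave the resulting graph bipartite.

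The paper sidesteps all of this. Instead of the eigenvalue equation at the Perron maximizer, it uses the Merris-type bound $q(G)\le \max_{u}\{d(u)+m(u)\}$ (Lemma~\ref{lem4}). Since $G$ is $C_{3}$-free, $N(u^{*})$ is independent, so $\sum_{v\in N(u^{*})}d(v)\le m$ and hence $d(u^{*})+m(u^{*})\le d(u^{*})+m/d(u^{*})$. For $m\ge 4k+4$, convexity of $x+m/x$ on $[2,m-2k-2]$ gives $d(u^{*})+m/d(u^{*})\le m-2k$, contradicting $q>m-2k$; thus $d(u^{*})=m-2k-1$ in one stroke, and a direct edge count around $u^{*}$ (no iterative shifting) pins $G\cong C_{2k+3}\bullet K_{1,m-2k-3}$. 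A separate short argument via $q\le\max_{uv\in E}\{d(u)+d(v)\}$ (Lemma~\ref{lem5}) handles $m\le 4k+3$, and connectivity is obtained by identifying one vertex from each component rather than by edge replacement.
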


The rest of the paper is organized as follows. In Section 2, we introduce the equitable partition with respect to a real square matrix, and give some useful lemmas.
The proofs of Theorems \ref{main1} and \ref{main2} will be presented in subsequent sections.

\section{Some auxiliary results}

Let $G$ be a graph with vertex set $V(G)=\{1,2,\ldots,n\}$.
Let $M$ be a real $n\times n$ matrix defined on the vertices of $G$.
Given a partition $\Pi: V(G)=V_{1}\cup V_{2}\cup\cdots\cup V_{k}$, the matrix $M$ can be partitioned as
$$
M=
\begin{pmatrix}
M_{1,1}&M_{1,2}&\cdots&M_{1,k}\\
M_{2,1}&M_{2,2}&\cdots&M_{2,k}\\
\vdots&\vdots&\ddots&\vdots\\
M_{k,1}&M_{k,2}&\cdots&M_{k,k}
\end{pmatrix}
.$$
The quotient matrix of $M$ with respect to $\Pi$ is defined as the $k\times k$ matrix $B_{\Pi} = (b_{i,j})^{k}_{i,j=1}$, where $b_{i,j}$ is the average value of all row sums of $M_{i,j}$. The partition $\Pi$ is called equitable if each block $M_{i,j}$ of $M$ has constant row sum $b_{i,j}$. Also, we say that the quotient matrix $B_{\Pi}$ is equitable if $\Pi$ is an equitable partition of $M$.

\input{f2.TpX}

The eigenvalues of a given matrix and its quotient matrix satisfy the following property.

\begin{lemma}[\!\cite{Brouwer2011,Godsil2001}]\label{lem2}
 Let $M$ be a real symmetric matrix and $\lambda(M)$ be its largest eigenvalue.
 Let $B_{\Pi}$ be an equitable quotient matrix of $M$. Then the eigenvalues of $B_{\Pi}$ are also eigenvalues of $M$.
 Furthermore, if $M$ is nonnegative and irreducible, then $\lambda(M)=\lambda(B_{\Pi})$.
\end{lemma}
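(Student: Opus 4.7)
The plan is to use the characteristic matrix of the partition as a bridge between $M$ and $B_\Pi$, and then invoke Perron--Frobenius for the second assertion.

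Setup and key identity. Let $P$ be the $n \times k$ characteristic matrix of $\Pi$, defined by $P_{j,i}=1$ if $j \in V_i$ and $P_{j,i}=0$ otherwise. The columns of $P$ are the indicator vectors of the (disjoint, nonempty) blocks $V_1,\ldots,V_k$, so $P$ has linearly independent columns. A direct computation gives $(MP)_{j,i} = \sum_{\ell \in V_i} M_{j,\ell}$, which is precisely the row sum of the block $M_{s,i}$ for the unique $s$ with $j \in V_s$. Equitability of $\Pi$ says this sum equals $b_{s,i}$, independent of the particular row $j \in V_s$, so $(MP)_{j,i} = b_{\text{block}(j),i} = (PB_\Pi)_{j,i}$. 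Therefore
\begin{equation*}
MP = PB_\Pi.
\end{equation*}

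First assertion. Let $(\mu,v)$ be any eigenpair of $B_\Pi$. Then $M(Pv) = (MP)v = (PB_\Pi)v = \mu(Pv)$. Since $P$ has linearly independent columns and $v \neq 0$, we have $Pv \neq 0$, so $\mu$ is an eigenvalue of $M$. In particular every eigenvalue of $B_\Pi$ is an eigenvalue of $M$, so $\lambda(B_\Pi) \le \lambda(M)$ automatically.

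Second assertion. Suppose in addition that $M$ is nonnegative and irreducible. By the Perron--Frobenius theorem there is a strictly positive eigenvector $x>0$ with $Mx = \lambda(M)\,x$. Transposing the key identity and using $M = M^T$ gives $P^T M = B_\Pi^T P^T$, and therefore
\begin{equation*}
B_\Pi^T (P^T x) \;=\; P^T M x \;=\; \lambda(M)\, P^T x.
\end{equation*}
Since every entry of $P^T x$ equals $\sum_{j\in V_i} x_j > 0$, the vector $P^T x$ is nonzero, so $\lambda(M)$ is an eigenvalue of $B_\Pi^T$, and hence of $B_\Pi$. This forces $\lambda(M) \le \lambda(B_\Pi)$; combined with the reverse inequality from the first assertion, $\lambda(M) = \lambda(B_\Pi)$.

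Anticipated obstacle. The result is essentially bookkeeping once the identity $MP = PB_\Pi$ is nailed down, so the only real care needed is the symmetry-based transpose step in the second part: without $M = M^T$, the argument would at best yield an eigenvalue of $B_\Pi^T$ paired to a right-hand vector, and one would have to detour through the fact that $B_\Pi$ and $B_\Pi^T$ share their spectrum. Here symmetry of $M$ makes this painless, and nothing else in the proof poses a genuine difficulty.
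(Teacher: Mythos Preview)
The paper does not supply its own proof of this lemma; it is quoted as a known result from the textbooks of Brouwer--Haemers and Godsil--Royle. Your argument is correct and is essentially the standard proof found in those references: the intertwining relation $MP = PB_\Pi$ lifts eigenvectors of $B_\Pi$ to eigenvectors of $M$, and Perron--Frobenius together with the transposed relation pushes the Perron eigenvalue of $M$ back down into the spectrum of $B_\Pi$.

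One small remark on your closing paragraph: even with $M=M^T$ you still pass through ``$\lambda(M)$ is an eigenvalue of $B_\Pi^T$, hence of $B_\Pi$,'' so the detour you describe is already present. The role of symmetry is to turn $MP=PB_\Pi$ into $P^TM=B_\Pi^TP^T$, not to avoid the transpose on $B_\Pi$. This is cosmetic and does not affect the validity of the proof.
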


The edge transformation is a classical tool in spectral graph theory. The following lemma presents a transformation increasing the $Q$-index.

\begin{lemma}[\!\cite{Hong2005}]\label{lem3}
Let $G$ be a connected graph and $X=(x_{v})_{v\in V (G)}$ be the Perron vector of $Q(G)$.
Assume that $u_{1}v\notin E(G)$ while $u_{2}v\in E(G)$. If $x_{u_{1}}\geq x_{u_{2}}$, then $q(G-u_{2}v+u_{1}v)>q(G)$.
\end{lemma}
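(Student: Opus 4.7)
The plan is to exploit the Rayleigh quotient characterisation of the largest eigenvalue of the symmetric matrix $Q(G)$. Writing $X=(x_v)_{v\in V(G)}$ for the Perron vector of $Q(G)$, normalised so $X^{T}X=1$, I would first record the standard identity
\begin{equation*}
y^{T}Q(G)y=\sum_{uv\in E(G)}(y_{u}+y_{v})^{2}\qquad\text{for every }y\in\mathbb{R}^{V(G)},
\end{equation*}
so that $q(G)=X^{T}Q(G)X=\max_{\|y\|=1}\sum_{uv\in E(G)}(y_{u}+y_{v})^{2}$. For the switched graph $G':=G-u_{2}v+u_{1}v$ the set of edges differs from that of $G$ in exactly one swap, so the analogous identity gives
\begin{equation*}
X^{T}Q(G')X-X^{T}Q(G)X=(x_{u_{1}}+x_{v})^{2}-(x_{u_{2}}+x_{v})^{2}.
\end{equation*}

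Next I would combine this with the Rayleigh variational bound $q(G')\ge X^{T}Q(G')X$, obtaining
\begin{equation*}
q(G')\ge q(G)+(x_{u_{1}}+x_{v})^{2}-(x_{u_{2}}+x_{v})^{2}.
\end{equation*}
Since $G$ is connected, the Perron–Frobenius theorem gives $X>0$ entrywise, so $x_{u_{1}}+x_{v}$ and $x_{u_{2}}+x_{v}$ are both positive; the hypothesis $x_{u_{1}}\ge x_{u_{2}}$ therefore yields $(x_{u_{1}}+x_{v})^{2}\ge(x_{u_{2}}+x_{v})^{2}$, and hence $q(G')\ge q(G)$. If $x_{u_{1}}>x_{u_{2}}$ strictly, the inequality is already strict and we are done.

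The main obstacle is upgrading to strict inequality in the degenerate case $x_{u_{1}}=x_{u_{2}}$, where the bound above only gives $q(G')\ge q(G)$. I would handle this by contradiction: assume $q(G')=q(G)$. Then $X$ is a positive unit vector for which $X^{T}Q(G')X$ attains the largest eigenvalue of the real symmetric matrix $Q(G')$, so $X$ must itself be an eigenvector of $Q(G')$ for $q(G')$. Subtracting the two eigen-equations yields $\bigl(Q(G')-Q(G)\bigr)X=0$. A direct inspection of $Q(G')-Q(G)$ shows its only non-zero entries are $+1$ at $(u_{1},u_{1})$, $(u_{1},v)$, $(v,u_{1})$ and $-1$ at $(u_{2},u_{2})$, $(u_{2},v)$, $(v,u_{2})$, so the $u_{1}$-row of the equation reads $x_{u_{1}}+x_{v}=0$.

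This contradicts $X>0$, so the degenerate case cannot occur, and we conclude $q(G')>q(G)$ in all situations. The proof thus reduces to (i) the Rayleigh-quotient identity for $Q$, (ii) positivity of the Perron vector, and (iii) the short eigenvector argument eliminating equality; no combinatorial structure of $G$ beyond connectedness is needed.
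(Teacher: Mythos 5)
Your proof is correct and complete: the Rayleigh-quotient identity, the positivity of the Perron vector of the connected graph $G$, and the eigenvector argument ruling out equality (via the $u_{1}$-row of $\bigl(Q(G')-Q(G)\bigr)X=0$) together give the strict inequality in all cases. The paper itself offers no proof of this lemma, citing it from the literature, and your argument is essentially the standard one used there, so there is nothing further to compare.
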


An internal path of $G$ is a path (or cycle) with vertices $v_{1}, v_{2}, \ldots, v_{k}$ (or $v_{1}=v_{k}$) such that $d(v_{1}), d(v_{k})\geq3$ and
$d(v_{2})= \cdots= d(v_{k-1})= 2$, where $k\geq 2$.

\begin{lemma}[\!\cite{FLZ}]\label{lem7}
Let $G$ be a connected graph and $uv$ be an edge on an internal path of $G$. If we subdivide $uv$, that is,
add a new vertex $w$ and substitute $uv$ by a path $uwv$, and denote the new graph by $G_{uv}$, then $q(G_{uv}) < q(G)$.
\end{lemma}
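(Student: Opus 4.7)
The plan is to prove the subdivision inequality by comparing Rayleigh quotients: I would transport the Perron eigenvector of $Q(G_{uv})$ back to a test vector on $V(G)$ that ``unsubdivides'' the edge, and then show that its Rayleigh quotient for $Q(G)$ strictly exceeds $q(G_{uv})$.

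First, let $Y = (y_x)_{x \in V(G_{uv})}$ be the positive unit Perron eigenvector of $Q(G_{uv})$ with eigenvalue $q' := q(G_{uv})$, and record the eigen-equation at the new degree-$2$ vertex $w$:
\[
(q' - 2)\, y_w \;=\; y_u + y_v.
\]
Since $G$ is connected and nontrivial, $q' > 2$ and $Y$ is coordinate-wise positive by Perron--Frobenius. Next, I would define a test vector $Z$ on $V(G)$ by $z_x = y_x$ for $x \notin \{u, v\}$ and by redistributing the weight $y_w$ between the endpoints,
\[
z_u \;=\; y_u + \alpha\, y_w, \qquad z_v \;=\; y_v + (1 - \alpha)\, y_w,
\]
for a parameter $\alpha \in [0, 1]$ to be chosen. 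The vector $Z$ collapses the $u$-$w$-$v$ path of $G_{uv}$ back to the edge $uv$ of $G$ while preserving the total mass.

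I would then expand
\[
\Delta \;:=\; Z^{T} Q(G)\, Z \;-\; q'\, Z^{T} Z,
\]
partitioning the edges of $G$ into those disjoint from $\{u, v\}$, those incident to exactly one of $u, v$ and distinct from $uv$, and the edge $uv$ itself, and subtracting the corresponding decomposition of $q'\, Y^{T} Y = \sum_{xy \in E(G_{uv})}(y_x + y_y)^{2}$. Substituting $(q'-2)y_w = y_u + y_v$ to eliminate $y_u + y_v$, the expression $\Delta$ simplifies to a quadratic $f(\alpha)$ whose coefficients are explicit functions of $y_u, y_v, y_w$ and of the neighbor-sums $S_u = \sum_{t \in N_G(u)\setminus\{v\}} y_t$ and $S_v = \sum_{t \in N_G(v)\setminus\{u\}} y_t$.

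The final step is to exhibit an $\alpha \in [0,1]$ with $f(\alpha) > 0$, for then the Rayleigh principle yields $q(G) \geq Z^{T}Q(G)Z / Z^{T}Z > q'$. This is where the internal-path hypothesis becomes essential: when the internal path has length at least $2$, the degree-$2$ recurrence $y_{v_{i+1}} = (q'-2)y_{v_i} - y_{v_{i-1}}$ propagates along the path and gives enough lower-bound control on $S_u$ and $S_v$ to force $f(1/2) > 0$. The main obstacle will be the degenerate case in which $uv$ is itself the entire internal path ($k = 2$), where no interior recurrence is available and the endpoint eigen-equations $(q' - d_G(u))\, y_u = y_w + S_u$ and $(q' - d_G(v))\, y_v = y_w + S_v$ must be used directly; the hypothesis $d_G(u), d_G(v) \geq 3$ forces $S_u, S_v$ to be strictly positive and large enough that a carefully tuned $\alpha$ (for instance, proportional to $y_v/(y_u + y_v)$) makes $f(\alpha)$ strictly positive. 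Strict positivity of $Y$ throughout makes every inequality strict, giving $q(G_{uv}) < q(G)$ as required.
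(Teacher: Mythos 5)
This lemma is not proved in the paper at all: it is imported from \cite{FLZ}, so there is no ``paper proof'' to match. Judged on its own terms, your Rayleigh-quotient framework is set up correctly (the test vector $Z$, the relation $(q'-2)y_w=y_u+y_v$, and the decomposition of $Z^TQ(G)Z-q'Z^TZ$ are all fine), but the proposal stops exactly where the lemma actually lives. Carrying out the expansion you describe and eliminating $S_u,S_v$ via the eigen-equations at $u$ and $v$ gives
\begin{equation*}
f(\alpha)=2y_uy_v-2\alpha y_uy_w-2(1-\alpha)y_vy_w+y_w^2\Bigl[q'-3-(q'+1-d_u)\alpha^2-(q'+1-d_v)(1-\alpha)^2\Bigr],
\end{equation*}
and in particular
\begin{equation*}
f(1/2)=2y_uy_v+\tfrac{1}{4}y_w^2\bigl(d_u+d_v-6-2q'\bigr)\leq 2\bigl(y_uy_v-y_w^2\bigr),
\end{equation*}
since $q'\geq\max\{d_u,d_v\}+1$. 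This is \emph{not} positive in general: take $G$ to consist of adjacent vertices $u,v$ with $d_u=3$, $d_v=N$, and pendant leaves attached (so $uv$ is an internal path with $k=2$). A numerical solve for $N=100$ gives $y_v=1$, $y_w\approx 0.0101$, $y_u\approx 0.000103$, hence $y_uy_v\approx y_w^2$ and $f(1/2)\approx -0.0025<0$. So the assertion that a ``carefully tuned $\alpha$'' works cannot be waved through: in this same example the maximum of the concave quadratic $f$ over all $\alpha$ is about $+10^{-4}$, i.e.\ the first-order contributions cancel exactly and the sign is decided by second-order terms. A proof along your lines would therefore have to verify positivity of $\max_\alpha f(\alpha)$ by exact computation in all degree/eigenvector configurations, and nothing in the proposal does this; the appeal to the path recurrence for $k\geq 3$ is likewise unsubstantiated (for an edge interior to the path one has $d_u=d_v=2$ and $f(1/2)\approx\frac{y_w^2}{2}(q'^2-5q'+3)$, which is negative when $q'$ is near its minimum value $4$).

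In short, the genuine gap is the final positivity claim: it is the entire content of the lemma, it is false for the specific $\alpha$ you propose to use in the generic case, and for the optimal $\alpha$ it holds only by a razor-thin margin that your argument does not control. The standard proofs avoid this trap: either one works with characteristic polynomials of $Q$ under subdivision (as in \cite{FLZ}), or one uses the identity $q(G)=\lambda_1(S(G))^2$ for the subdivision graph $S(G)$ to reduce the statement to the Hoffman--Smith internal-path lemma for adjacency matrices, where subdividing $uv$ in $G$ corresponds to inserting two vertices into an internal path of $S(G)$. I would recommend either citing the result, as the paper does, or rebuilding the argument on one of those two routes.
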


The following two lemmas give upper bounds on the $Q$-index of a graph.

\begin{lemma}[\!\cite{CRS2007}]\label{lem5}
For every graph $G$,
\begin{eqnarray*}
q(G)\leq \max\{d(u)+d(v): uv\in E(G)\}.
\end{eqnarray*}
If $G$ is connected, then equality holds if and only if $G$ is a regular graph or a semi-regular bipartite graph.
\end{lemma}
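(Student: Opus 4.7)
The plan is to apply the Perron vector of $Q(G)$ to a carefully chosen pair of endpoints. Suppose first that $G$ is connected, so that $Q(G)$ is nonnegative and irreducible and hence admits a positive Perron vector $X=(x_w)_{w\in V(G)}$ with $Q(G)X=q(G)X$. I would select $u\in V(G)$ maximizing $x_u$, and then a neighbor $v$ of $u$ maximizing $x_v$ among all neighbors of $u$. Writing the eigenvalue equation at $u$ and at $v$ gives
\begin{equation*}
(q(G)-d(u))\,x_u=\sum_{w\sim u}x_w\le d(u)\,x_v,\qquad (q(G)-d(v))\,x_v=\sum_{w\sim v}x_w\le d(v)\,x_u,
\end{equation*}
where the first inequality uses the choice of $v$ and the second the global maximality of $x_u$. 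Multiplying and dividing by $x_ux_v>0$ produces
\begin{equation*}
(q(G)-d(u))(q(G)-d(v))\le d(u)\,d(v),
\end{equation*}
which rearranges to $q(G)\bigl(q(G)-d(u)-d(v)\bigr)\le 0$ and hence $q(G)\le d(u)+d(v)\le\max\{d(u')+d(v'):u'v'\in E(G)\}$. When $G$ is disconnected I would apply the estimate to each component and take the maximum.

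For the equality characterization, suppose $q(G)$ equals the stated maximum, so both inequalities above are tight. Tightness forces $x_w=x_v$ for every $w\sim u$ and $x_w=x_u$ for every $w\sim v$. I would then propagate this two-value property along edges of the connected graph $G$: any vertex realizing the value $x_u$ can play the role of $u$ in the argument (its maximum over its neighbors is again $x_v$, since $x_u$ is the global maximum and some neighbor attains $x_v$), and similarly any vertex realizing $x_v$ can play the role of $v$. By a straightforward induction on distances from $u$, the Perron vector $X$ therefore assumes at most two positive values $\alpha$ on some set $A$ and $\beta$ on some set $B$ with $V(G)=A\cup B$. If $\alpha=\beta$, then $X$ is constant and the eigenvalue equation collapses to the statement that every vertex has the same degree, so $G$ is regular. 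If $\alpha\ne\beta$, then the forced coupling (each $A$-vertex sees only $B$-neighbors and conversely) shows $A$ and $B$ are independent sets, so $G$ is bipartite with bipartition $(A,B)$; re-reading the eigenvalue equation at an arbitrary vertex of $A$ (respectively $B$) pins down a common degree on each side, giving a semi-regular bipartite graph. The converse direction is a short direct check: for an $r$-regular graph $q(G)=2r$ and for an $(a,b)$-semi-regular bipartite graph $q(G)=a+b$, both of which realize the edge-degree-sum maximum.

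The most delicate step will be the equality analysis, specifically the propagation argument that extends the two-value dichotomy of the Perron vector from the single starting edge $uv$ to the whole connected graph, and the subsequent clean deduction of the regular versus semi-regular bipartite structure from this dichotomy.
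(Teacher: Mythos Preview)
The paper does not supply a proof of this lemma; it is quoted from Cvetkovi\'{c}--Rowlinson--Simi\'{c} \cite{CRS2007} as a known result, so there is no in-paper argument to compare against. Your Perron-vector argument is correct: the inequality follows cleanly from multiplying the two eigenvalue relations at $u$ and $v$, and your propagation for the equality case works because each newly reached vertex is adjacent to a vertex whose Perron entry has already been pinned down, so tightness of the first inequality forces all of its neighbours to share that known value. For context, the proof most often cited for this bound goes through the incidence matrix: writing $Q(G)=RR^{T}$, the companion matrix $R^{T}R$ has the same nonzero spectrum and its row sum at an edge $uv$ equals $d(u)+d(v)$, so the bound is simply the maximum-row-sum bound for a nonnegative irreducible matrix, and equality forces $d(u)+d(v)$ to be constant over all edges, from which the regular/semi-regular bipartite dichotomy follows by a short parity argument. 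Your direct approach avoids that detour at the cost of the slightly more delicate two-value propagation you already identified as the crux.
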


\begin{lemma}[\!\cite{Merris1998, Feng2009}]\label{lem4}
For every graph $G$,
\begin{eqnarray*}
q(G)\leq \max\{d(u)+m(u): u\in V(G)\},
\end{eqnarray*}
where $m(u)=\frac{1}{d(u)}\sum\limits_{v\in N(u)}d(v)$.
If $G$ is connected, then equality holds if and only if $G$ is either a regular graph or a semi-regular bipartite graph.
\end{lemma}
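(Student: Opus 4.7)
The plan is to reduce the bound to the standard Perron--Frobenius maximum-row-sum estimate, applied to a matrix similar to $Q(G)$ whose rows have been rescaled by degrees. Assume first that $G$ has no isolated vertex (isolated vertices leave both sides of the inequality unchanged, since they contribute only a trivial $0$ to the spectrum of $Q(G)$). Let $D=D(G)$, which is then invertible, and set $\widetilde Q := D^{-1}Q(G)D$. Since $D$ is invertible, $\widetilde Q$ is similar to $Q(G)$ and in particular shares its spectrum, so $q(G)=\lambda_{1}(\widetilde Q)$. The entries $\widetilde Q_{uv}=\frac{d(v)}{d(u)}Q(G)_{uv}$ are nonnegative, and $\widetilde Q$ is irreducible whenever $G$ is connected.

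The key point is a direct computation of the row sums: for each $u\in V(G)$,
\[
\sum_{v\in V(G)} \widetilde Q_{uv} \;=\; \frac{d(u)^{2}+\sum_{v\in N(u)}d(v)}{d(u)} \;=\; d(u)+m(u).
\]
The Perron root of a nonnegative matrix is bounded above by its maximum row sum, and therefore
\[
q(G)\;=\;\lambda_{1}(\widetilde Q)\;\leq\;\max_{u\in V(G)}\bigl(d(u)+m(u)\bigr).
\]
For a disconnected $G$ without isolated vertices the same argument is applied on each component, and $q(G)$ is the maximum over components.

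I expect the main obstacle to be the equality characterization. Assume $G$ is connected and equality holds. Irreducibility of the nonnegative matrix $\widetilde Q$ then forces all its row sums to agree at a common value $c$, hence $\widetilde Q\mathbf{1}=c\mathbf{1}$; transporting this back via the similarity yields $Q(G)\,d=c\,d$ for the degree vector $d=(d(u))_{u\in V(G)}$. Since $d$ is strictly positive, uniqueness of the positive Perron eigenvector of $Q(G)$ forces $c=q(G)$ and $d$ to be a Perron vector. Writing $\Delta:=\max_{u}d(u)$ and $\delta:=\min_{u}d(u)$, the identity $d(u)+m(u)\equiv c$ combined with the obvious bounds $\delta\leq m(u)\leq\Delta$, specialised at a vertex of degree $\Delta$ and one of degree $\delta$, pins down $c=\Delta+\delta$. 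Reading off what this forces on the neighbourhood of an extremal-degree vertex, every $\Delta$-vertex must be adjacent only to $\delta$-vertices (its neighbour degrees average to $\delta$ while being at least $\delta$) and, dually, every $\delta$-vertex only to $\Delta$-vertices; connectedness then rules out intermediate degrees. The resulting graph is either $\Delta$-regular (if $\Delta=\delta$) or $(\Delta,\delta)$-semi-regular bipartite. The converse is a routine check: in either case $Q(G)\,d=(\Delta+\delta)\,d$, so $d(u)+m(u)\equiv\Delta+\delta=q(G)$, and equality is attained.
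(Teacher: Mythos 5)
The paper does not prove this lemma---it is quoted from Merris and from Feng--Yu---and your argument is precisely the standard proof from those sources: conjugating $Q(G)$ by $D$ turns the claimed bound into the maximum-row-sum estimate for a nonnegative matrix, and irreducibility reduces the equality case to constancy of the row sums, i.e.\ $Q(G)d=cd$ for the degree vector $d$. Your execution is correct throughout, including the handling of isolated vertices (where $m(u)$ is otherwise undefined) and the degree-averaging argument that forces $c=\Delta+\delta$ and hence regularity or semi-regular bipartiteness.
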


\section{Proof of Theorem \ref{main1}}

Before proving Theorem \ref{main1}, we first estimate the $Q$-index of the graph $C_{2k+3}\circ(n-2k-2,1,\ldots,1)$.

\begin{lemma}\label{lem6}
Let $k\geq2$ and $n\geq2k+3$. The $Q$-index of the graph $C_{2k+3}\circ(n-2k-2,1,\ldots,1)$ satisfies
\begin{eqnarray*}
q(C_{2k+3}\circ(n-2k-2,1,\ldots,1))>n-2k+1-\frac{3}{2(n-2k+4)}.
\end{eqnarray*}
\end{lemma}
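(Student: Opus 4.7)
Set $N := n-2k-2$, so $G = C_{2k+3}\circ(N,1,\ldots,1)$ has a blown-up class $V_{1}$ of $N$ degree-two vertices attached to the two ``hubs'' $v_{2},v_{2k+3}$ of degree $N+1$, joined by the path $v_{3}v_{4}\cdots v_{2k+2}$. The reflection $v_{i}\mapsto v_{2k+5-i}$ is an automorphism of $G$, so the partition
\[
\Pi:\ V_{1},\ \{v_{2},v_{2k+3}\},\ \{v_{3},v_{2k+2}\},\ \ldots,\ \{v_{k+2},v_{k+3}\}
\]
into $k+2$ classes is equitable for $Q(G)$. Hence by Lemma~\ref{lem2}, $q(G)=\lambda_{\max}(B_{\Pi})$ for the $(k+2)\times(k+2)$ quotient matrix $B_{\Pi}$.

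The strategy is to lower-bound $q(G)$ uniformly in $k$ by the largest eigenvalue of a single $4\times 4$ matrix. Passing to the symmetrization $\widetilde B:=D^{1/2}B_{\Pi}D^{-1/2}$ with $D=\operatorname{diag}(N,2,\ldots,2)$ and restricting to the first four classes produces
\[
\widetilde B_{4}:=\begin{pmatrix}
 2 & \sqrt{2N} & 0 & 0\\
 \sqrt{2N} & N+1 & 1 & 0\\
 0 & 1 & 2 & 1\\
 0 & 0 & 1 & 2
\end{pmatrix}.
\]
For $k\ge 3$ this is a principal submatrix of $\widetilde B$, and Cauchy interlacing gives $q(G)\ge\lambda_{\max}(\widetilde B_{4})$; for $k=2$ the $(4,4)$-entry of $\widetilde B$ is $3$ instead of $2$ (since $v_{k+2}v_{k+3}$ lies inside the last class), but the difference $\widetilde B-\widetilde B_{4}=e_{4}e_{4}^{\top}$ is positive semidefinite, so the same bound persists.

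Cofactor expansion from the last row of $\lambda I-\widetilde B_{4}$ gives
\[
f(\lambda):=\det(\lambda I-\widetilde B_{4})=\lambda^{4}-(N+7)\lambda^{3}+(4N+16)\lambda^{2}-(3N+13)\lambda+2.
\]
Since $f$ is monic, it suffices to prove $f(\lambda^{*})<0$ at $\lambda^{*}:=n-2k+1-\tfrac{3}{2(n-2k+4)}=N+3-\epsilon$ with $\epsilon:=3/[2(N+6)]$. Taylor-expanding $f$ about $N+3$ (using $f(N+3)=N^{2}+2N-1$, $f'(N+3)=N^{3}+5N^{2}+8N+2$, and so on), multiplying the result by $16(N+6)^{4}$, and substituting $Y:=N+6\ge 7$ reduces the desired inequality to
\[
8Y^{6}-152Y^{5}+868Y^{4}\ >\ 1032Y^{3}+1818Y^{2}+702Y+81.
\]
The LHS factors as $4Y^{4}(2Y^{2}-38Y+217)$, and the quadratic $2Y^{2}-38Y+217$ has discriminant $-292<0$ with minimum $36.5$ at $Y=9.5$, so $\mathrm{LHS}\ge 146\,Y^{4}$. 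Crudely, $\mathrm{RHS}\le 1307\,Y^{3}$ for $Y\ge 7$, whence $\mathrm{LHS}-\mathrm{RHS}\ge Y^{3}(146Y-1307)>0$ for $Y\ge 9$; the remaining cases $Y=7,8$ are confirmed by direct substitution.

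\textbf{Main obstacle.} The natural $3\times 3$ analogue, whose characteristic polynomial factors as $(\lambda-2)(\lambda^{2}-(N+3)\lambda+1)$, gives the cleaner bound $\tfrac{1}{2}(N+3+\sqrt{(N+3)^{2}-4})$, but this only exceeds $\lambda^{*}$ when $N\ge 4$. The real work is therefore to pass to the $4\times 4$ submatrix $\widetilde B_{4}$ and to pin down the polynomial inequality in $Y$ precisely at $Y\in\{7,8,9\}$ (i.e.\ for the small cases $N\in\{1,2,3\}$), where the margin between the two sides is slim.
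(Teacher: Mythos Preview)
Your proof is correct, but the route differs from the paper's. The paper does not work with the full $(k+2)$-class equitable partition of $G$; instead it passes to an explicit \emph{proper subgraph} $G_{0}$ obtained by deleting the middle edges of the long path (keeping only the two hubs, the blown-up class, and two further vertices on each side of the path), so that $G_{0}$ itself admits a $4$-class equitable partition with quotient matrix
\[
\begin{pmatrix} N+1 & N & 1 & 0\\ 2 & 2 & 0 & 0\\ 1 & 0 & 2 & 1\\ 0 & 0 & 1 & 1 \end{pmatrix},\qquad N=n-2k-2,
\]
whose characteristic polynomial is $g(x)=x\bigl(x^{3}-(N+6)x^{2}+(3N+11)x-(N+6)\bigr)$; the bound then follows from $q(G)>q(G_{0})$ together with $g(\lambda^{*})<0$ (asserted without the detailed expansion you carry out). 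Your argument reaches a $4\times 4$ matrix by a different mechanism: you keep the full graph, exploit the reflection symmetry to symmetrize the $(k+2)\times(k+2)$ quotient, and then invoke Cauchy interlacing (or the PSD perturbation $e_{4}e_{4}^{\top}$ when $k=2$) to drop to the leading $4\times 4$ block $\widetilde B_{4}$. The two $4\times 4$ matrices are not the same (your $(4,4)$-entry is $2$ rather than $1$), so the characteristic polynomials and the ensuing calculations differ, but either suffices. Trade-offs: the paper's subgraph trick is more elementary (only $q$-monotonicity under edge addition, no interlacing), while your approach avoids guessing the right subgraph, handles the dependence on $k$ uniformly through interlacing, and gives a fully explicit verification of the polynomial sign including the borderline cases $N\in\{1,2,3\}$.
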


\begin{proof}
Let $G_{0}$ be a graph on $n-2k+4$ vertices, as shown in Fig. \ref{3.0}.
For the vertices of $G_{0}$, we consider a partition
$$\Pi: V(G_{0})=V_{1}\cup V_{2}\cup V_{3}\cup V_{4}.$$
Then the quotient matrix $B_{\Pi}$ of the $Q$-matrix $Q(G_{0})$ with respect to the partition $\Pi$ is as follows:
$$
B_{\Pi}=
\begin{pmatrix}
n-2k-1&n-2k-2&1&0\\
2&2&0&0\\
1&0&2&1\\
0&0&1&1
\end{pmatrix}.
$$
By a direct computation, the characteristic polynomial of $B_{\Pi}$ is
\begin{eqnarray*}
g(x)=|xI_{4}-B_{\Pi}|=x(x^{3}-(n-2k+4)x^{2}+(3n-6k+5)x-(n-2k+4)).
\end{eqnarray*}
Note also that $\Pi$ is an equitable partition.
According to Lemma \ref{lem2}, it follows that $q(G_{0})$ is the largest root of $g(x)=0$.
Since $g(n-2k+1-\frac{3}{2(n-2k+4)})<0$, we have
\begin{eqnarray}\label{eq_a}
q(G_{0})>n-2k+1-\frac{3}{2(n-2k+4)}.
\end{eqnarray}

Since $k\geq2$ and $n\geq2k+3$, one can see that $G_{0}$ is a proper subgraph of $C_{2k+3}\circ(n-2k-2,1,\ldots,1)$.
Hence we have $q(G)>q(G_{0})$. Combining (\ref{eq_a}), the result follows directly.
\end{proof}

\input{f3.TpX}
Next we always assume that $G$ is the extremal graph with maximum $Q$-index among all the non-bipartite $\{C_{3},C_{5},\ldots,$ $C_{2k+1}\}$-free graphs of order $n$.
In the following, we first show that $G$ must be the blow-up of a cycle.

\begin{lemma}\label{lem10}
Suppose that $G$ has the maximum $Q$-index among all $\{C_{3},C_{5},\ldots,$ $C_{2k+1}\}$-free non-bipartite graphs of order $n$. Then we have
\begin{eqnarray*}
G\cong C_{2k+3}\circ(n_{1},n_{2},n_{3},n_{4}, 1,\ldots,1).
\end{eqnarray*}
\end{lemma}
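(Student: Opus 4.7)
The plan is to pin down a shortest odd cycle $C$ of $G$ and then use chord-parity and Perron-vector edge-swaps (Lemmas \ref{lem3}, \ref{lem7}) to force the blow-up structure, arriving at $G\cong C_{2k+3}\circ(n_1,\ldots,n_{2k+3})$, and finally to confine the enlarged classes to a single arc of length four.

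First I would show the odd girth of $G$ equals $2k+3$. The lower bound is immediate from the hypotheses; to rule out odd girth $\geq 2k+5$, I plan to combine the degree-based upper bounds of Lemmas \ref{lem4} and \ref{lem5} with the lower bound $q(G)>n-2k+1-\tfrac{3}{2(n-2k+4)}$ of Lemma \ref{lem6}, which $G$ must satisfy by extremality. Fix a shortest odd cycle $C=v_1v_2\cdots v_{2k+3}$. For any $u\notin V(C)$ with chords $uv_i,uv_j$, the two cycles through $u$ have lengths $|i-j|+2$ and $2k+5-|i-j|$; exactly one is odd, and its length must be at least $2k+3$, which forces the cyclic distance between $v_i$ and $v_j$ on $C$ to equal $2$. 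Consequently $|N(u)\cap V(C)|\leq 2$, with equality forcing the two neighbours to be consecutive-but-one on $C$. A short additional argument will show $G$ is connected (adding a cross-component edge creates no new cycle and strictly raises $q$) and that every $u\notin V(C)$ is already adjacent to $V(C)$ (otherwise a Lemma \ref{lem3} swap reattaching a neighbour of $u$ to a high-Perron-value vertex of $C$, or a Lemma \ref{lem7} unsubdivision along an internal path, would strictly improve $q(G)$).

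Next I would partition $V(G)$ into classes $V_1,\ldots,V_{2k+3}$ by assigning each $u\notin V(C)$ to the unique $v_i$ satisfying $N(u)\cap V(C)=\{v_{i-1},v_{i+1}\}$. Each $V_i$ is independent, since an edge inside $V_i$ together with $v_{i-1}$ or $v_{i+1}$ would form a forbidden triangle. All vertices in a given $V_i$ must share one common neighbourhood: if $u,u'\in V_i$ have $x_u\geq x_{u'}$ but $N(u)\neq N(u')$, then an appropriate Lemma \ref{lem3} swap would strictly increase $q(G)$ without creating a short odd cycle, contradicting extremality. This identifies $G$ with the blow-up $C_{2k+3}\circ(n_1,\ldots,n_{2k+3})$ for some positive integers $n_i$.

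The main obstacle is the final step: showing that at most four $n_i$'s exceed $1$ and that the indices with $n_i\geq 2$ form a consecutive arc on $C$. Here I plan to extract Perron-vector inequalities from the equitable quotient matrix of $Q(G)$ with respect to the blow-up partition (Lemma \ref{lem2}), and to argue that $x_{v_i}$ is small along any long "thin" arc of $C$. If some $n_i\geq 2$ lay outside a fixed consecutive block of four fat classes, I would relocate an extra vertex of $V_i$ into the block using a Lemma \ref{lem3} swap; since the swap only changes adjacencies between pairs already at cyclic distance $2$ on $C$, it preserves the forbidden-cycle condition while strictly increasing $q(G)$, contradicting extremality. Bounding the Perron entries precisely enough to drive this relocation for every "far" enlarged class is the key technical challenge.
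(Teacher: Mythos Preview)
Your route is quite different from the paper's, and the step you yourself flag as the ``main obstacle'' is a genuine gap. The paper never passes through the full blow-up $C_{2k+3}\circ(n_1,\ldots,n_{2k+3})$ and then pares it down; it obtains the four-class structure directly. It fixes an edge $u'v'$ realising $\max\{d(u)+d(v):uv\in E(G)\}$, and combining Lemma~\ref{lem5} with the lower bound of Lemma~\ref{lem6} gives $d(u')+d(v')\geq n-2k+1$. Triangle-freeness makes $N(u')$ and $N(v')$ disjoint and forces the induced subgraph on $N(u')\cup N(v')$ to be bipartite, so if $d(u')+d(v')\geq n-2k+2$ there are at most $2k-2$ vertices outside $N(u')\cup N(v')$, too few to carry an odd cycle of length $\geq 2k+3$; hence $d(u')+d(v')=n-2k+1$ exactly. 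The $2k-1$ leftover vertices are then forced to form a path $P_{2k-1}$ attached at its endpoints to $N(u')\setminus\{v'\}$ and $N(v')\setminus\{u'\}$, and splitting those two neighbourhoods according to adjacency with the path's endpoints produces the four classes $V_1,\ldots,V_4$ immediately, with edge-maximality filling in the complete bipartite pieces. The restriction to four consecutive fat classes is thus a \emph{byproduct} of the degree-sum equality, not a separate reduction.

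In your plan, by contrast, the reduction from a general $C_{2k+3}\circ(n_1,\ldots,n_{2k+3})$ to one with at most four consecutive enlarged parts is left as a ``key technical challenge'' with only a sketch. The relocation you describe is not a single-edge move covered by Lemma~\ref{lem3}: transferring a vertex from $V_i$ into a distant block means deleting all of its edges to $V_{i-1}\cup V_{i+1}$ and inserting new edges into the block, and you have not produced the Perron-entry inequalities that would make such a multi-edge transfer strictly increase $q$. There are smaller gaps earlier as well: your chord-parity argument gives $|N(u)\cap V(C)|\leq 2$ and, separately, $\geq 1$, but not $=2$, so vertices with a single cycle-neighbour are left unassigned; and ``common neighbourhood within each $V_i$'' does not by itself identify $G$ with the blow-up, since you still must rule out edges between non-adjacent classes $V_i,V_j$ and force all edges between adjacent ones. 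The paper's degree-sum argument sidesteps all of these issues in one stroke.
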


\begin{proof}
Since $G$ is non-bipartite and $\{C_{3},C_{5},\ldots,$ $C_{2k+1}\}$-free, it contains an odd cycle of length at least $2k+3$.
Clearly, $n\geq 2k+3$. We claim that $G$ is connected.
If not, we assume that $G_{1}$ and $G_{2}$ are two connected components of $G$.
By adding an edge between $G_{1}$ and $G_{2}$, we get a new graph $G'.$
Obviously, $G'$ is also a $\{C_{3},C_{5},\ldots,$ $C_{2k+1}\}$-free non-bipartite graph.
However, $q(G')>q(G),$ which contradicts the maximality of $q(G).$

According to Lemma \ref{lem6}, we obtain that
\begin{eqnarray}\label{eq_b}
  q(G)\geq q(C_{2k+3}\circ(n-2k-2,1,\ldots,1))\geq n-2k+1-\frac{3}{2(n-2k+4)}>n-2k.
  \end{eqnarray}
Let $u'v'$ be an edge of $G$ such that
$$d(u')+d(v')=\max\{d(u)+d(v): uv\in E(G)\}.$$
By Lemma \ref{lem5}, we have
\begin{eqnarray}\label{eq1}
q(G) \leq d(u')+d(v').
\end{eqnarray}
Combining (\ref{eq_b}) and (\ref{eq1}), we obtain that
\begin{eqnarray}\label{eq_c}
d(u')+d(v')\geq n-2k+1.
\end{eqnarray}
Since $G$ is $C_{3}$-free, $u'$ and $v'$ cannot have common neighbours.
If $d(u')+d(v')\geq n-2k+2,$ then $|V(G)\backslash\{N(u')\cup N(v')\}|\leq 2k-2$,
and hence the length of any cycle in $G$ is at most $2k+2$.
Moreover, since $G$ is $\{C_{3},C_{5},\ldots, C_{2k+1}\}$-free, it follows that $G$ is bipartite, a contradiction.
Therefore, we obtain that
\begin{eqnarray}\label{eq_d}
d(u')+d(v')\leq n-2k+1.
\end{eqnarray}
It follows from (\ref{eq_c}) and (\ref{eq_d}) that $d(u')+d(v')=n-2k+1.$
This means that $V(G)\backslash\{N(u')\cup N(v')\}$ contains exactly $2k-1$ vertices, say $v_{1},v_{2},\ldots,v_{2k-1}$.
As mentioned above, $G$ has an odd cycle of length at least $2k+3$, this implies that the vertices of $V(G)\backslash\{N(u')\cup N(v')\}$
induce a path $P_{2k-1}=v_{1}v_{2}\cdots v_{2k-1}$ such that $N(v_{1})\cap(N(u')\backslash\{v'\})\neq \emptyset$ and $N(v_{2k-1})\cap(N(v')\backslash\{u'\})\neq \emptyset$.
Moreover, since $G$ is $\{C_{3},C_{5},\ldots,$ $C_{2k+1}\}$-free,
we have $N(v_{i})\cap(N(u')\backslash\{v'\})=\emptyset$ and $N(v_{i})\cap(N(v')\backslash\{u'\})=\emptyset$ for $2\leq i\leq2k-2$.

\input{f4.TpX}\label{f3}

Define $N_{1}=\{v\in N(u') : vv_{1}\in E(G)\}$, $N_{2}=\{v\in N(v'): vv_{2k-1}\in E(G)\}$, $N_{3}=N(u')\backslash (N_{1}\cup \{v'\})$ and $N_{4}=N(v')\backslash (N_{2}\cup \{u'\}).$ According to the above analysis, one can see that $N_{1}\neq \emptyset$ and $N_{2}\neq \emptyset$.
Note that both $N_{1}$ and $N_{4}$ are independent sets as $G$ is $C_{3}$-free.
We claim that $G[N_{1}\cup N_{4}]$ is a complete bipartite graph.
If not, we choose two vertices $u_{1}\in N_{1}$ and $u_{2}\in N_{4}$ such that $u_{1}u_{2}\notin E(G)$.
Clearly, $G+u_{1}u_{2}$ is also a non-bipartite $\{C_{3},C_{5},\ldots,$ $C_{2k+1}\}$-free graph.
However, $q(G+u_{1}u_{2})>q(G),$ a contradiction.
Similarly, one can see that $G[N_{2}\cup N_{3}]$ and $G[N_{3}\cup N_{4}]$ are complete bipartite graphs.
Then at this stage the structure of the $G$ is established in the left part of Fig. \ref{4}.
Let $V_{1}=N_{1}$, $V_{2}=\{u'\}\cup N_{4}$, $V_{3}=N_{3}\cup\{v'\}$ and $V_{4}=N_{2}.$
Then $\cup_{i=1}^{4}V_{i}=V(G)\backslash\{v_{1},v_{2},\ldots,v_{2k-1}\}$ (see Fig. \ref{4}).
Set $|V_{i}|=n_{i}$ for $1\leq i\leq 4.$ Then we have $\sum_{i=1}^{4}n_{i}=n-2k+1$ and $n_{i}\geq1,$ where $1\leq i\leq 4.$
This implies that $G\cong C_{2k+3}\circ(n_{1},n_{2},n_{3},n_{4}, 1,\ldots,1)$, which completes the proof.
\end{proof}

Now we obtain a general structure of the extremal graph $G$, that is, $G\cong C_{2k+3}\circ(n_{1},n_{2},n_{3},n_{4}, 1,\ldots,1)$. Based on it, we prove the following result which is crucial to the proof of Theorem \ref{main1}.

\begin{lemma}\label{lem11}
Let $G\cong C_{2k+3}\circ(n_{1},n_{2},n_{3},n_{4},1,\ldots,1)$ with $\sum_{i=1}^{4}n_{i}=n-2k+1$. Then
there cannot exist exactly two integers $n_{i}$ and $n_{j}$ such that $n_{i}, n_{j}\geq2$ and $|i-j|=1$ in $G$, where $1\leq i,j\leq4$.
\end{lemma}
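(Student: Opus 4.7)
The plan is to derive a contradiction with the maximality of $q(G)$ (Lemma \ref{lem10}) by constructing, from such a $G$, another graph $G^{*}$ in the same class with $q(G^{*})>q(G)$. Since any blow-up of $C_{2k+3}$ with positive block sizes contains $C_{2k+3}$ and has odd girth $2k+3$, every intermediate graph produced by shifting vertices between blocks automatically lies in the class of $\{C_{3},C_{5},\ldots,C_{2k+1}\}$-free non-bipartite graphs of order $n$.

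\textbf{Step 1 (Reduction to two cases).} The reflective symmetry of $C_{2k+3}$ that interchanges $V_{i}\leftrightarrow V_{5-i}$ and $v_{j}\leftrightarrow v_{2k-j}$ shows that the configuration $n_{3},n_{4}\geq 2$ with $n_{1}=n_{2}=1$ is isomorphic to $n_{1},n_{2}\geq 2$ with $n_{3}=n_{4}=1$. Thus it suffices to rule out \textbf{Case A}: $n_{1},n_{2}\geq 2$ and $n_{3}=n_{4}=1$, and \textbf{Case B}: $n_{2},n_{3}\geq 2$ and $n_{1}=n_{4}=1$.

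\textbf{Step 2 (Perron components).} Let $X=(x_{v})_{v\in V(G)}$ be the Perron vector of $Q(G)$. The automorphisms of the blow-up force $X$ to be constant on each block; write $x_{i}$ for the common value on $V_{i}$ and $y_{j}$ for the value at $v_{j}$. The equations $Q(G)X=q(G)X$ reduce, via Lemma \ref{lem2}, to the equitable-quotient equations relating $x_{1},x_{2},x_{3},x_{4},y_{1},\ldots,y_{2k-1}$.

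\textbf{Step 3 (Block-moving perturbation).} For each case, I would shift a single vertex $u$ from one block to an adjacent block in the cyclic order of $C_{2k+3}$. The resulting graph $G^{*}$ is again a blow-up $C_{2k+3}\circ(n_{1}^{*},n_{2}^{*},n_{3}^{*},n_{4}^{*},1,\ldots,1)$ with all block sizes $\geq 1$ (using $n_{i}\geq 2$ for the source block), so $G^{*}$ is in the target class. From $X^{T}Q(H)X=\sum_{uv\in E(H)}(x_{u}+x_{v})^{2}$ and the Rayleigh quotient characterization of $q(G^{*})$ we get
\[
q(G^{*})-q(G)\geq\frac{1}{\|X\|^{2}}\left(\sum_{e\in E(G^{*})\setminus E(G)}(x_{u}+x_{v})^{2}-\sum_{e\in E(G)\setminus E(G^{*})}(x_{u}+x_{v})^{2}\right).
\]
It then suffices to exhibit one shift for which the right-hand side is strictly positive.

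\textbf{Step 4 (Sign analysis).} In Case A I would consider the two complementary shifts $V_{1}\to V_{2}$ and $V_{2}\to V_{1}$, producing Rayleigh changes $\Delta^{+}$ and $\Delta^{-}$ which are explicit quadratic expressions in $x_{1},x_{2},x_{3},y_{1}$ with coefficients determined by $n_{1},n_{2}$. Using the eigenvalue equations at $V_{1},V_{2},V_{3}$ to eliminate $y_{1}$ and $x_{3}$, together with the numerical window $q(G)\in\bigl(n-2k+1-\tfrac{3}{2(n-2k+4)},\ n-2k+1\bigr]$ coming from Lemma \ref{lem6} and the degree bound of Lemmas \ref{lem5} and \ref{lem10}, I would show that an appropriate positive linear combination of $\Delta^{+}$ and $\Delta^{-}$ is strictly positive, so at least one of the two shifts strictly increases $q$. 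Case B is treated analogously by taking the complementary shifts between $V_{2}$ and $V_{3}$ and using the eigenvalue equations at those blocks.

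\textbf{Main obstacle.} The crux is Step 4. The signs of $\Delta^{\pm}$ depend on the a priori unknown ordering among the Perron components $x_{1},x_{2},x_{3},x_{4},y_{1}$, so neither shift individually is guaranteed to improve $q$. The argument must exploit the tight numerical window for $q(G)$ from Lemma \ref{lem6} together with the quotient-matrix eigenvalue equations to force a suitable combination of $\Delta^{+}$ and $\Delta^{-}$ to be strictly positive. Case B is technically the harder one, because both large blocks lie in the interior of the $V_{1}$-$V_{4}$ segment and the eigenvalue relations couple all four $x_{i}$'s with both path-endpoint components $y_{1}$ and $y_{2k-1}$.
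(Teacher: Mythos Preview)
Your proposal is a plan rather than a proof, and the crux you identify in Step~4 is a genuine gap: you never establish that some positive combination of $\Delta^{+}$ and $\Delta^{-}$ is strictly positive, and there is no mechanism in your outline that forces this. The Rayleigh differences $\Delta^{\pm}$ are quadratic forms in the Perron components with coefficients depending on $n_{1},n_{2}$, and the eigenvalue equations for the $(2k+3)$-block quotient couple all of $x_{1},x_{2},x_{3},x_{4},y_{1},\ldots,y_{2k-1}$ in a $k$-dependent way; the tight window for $q(G)$ from Lemma~\ref{lem6} is not by itself enough to pin down the signs. Also note that your Case~B is superfluous: the blow-up $C_{2k+3}\circ(1,n_{2},n_{3},1,1,\ldots,1)$ is isomorphic, via a cyclic shift of the $(2k+3)$-cycle, to $C_{2k+3}\circ(n_{2},n_{3},1,\ldots,1)$, so Case~A already covers it. Your claim that Case~B is ``technically the harder one'' is therefore a red herring.

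The paper's argument avoids your obstacle by a different route. Instead of perturbing $G$ directly, it first contracts the internal path $v_{1}\cdots v_{2k-1}$ using Lemma~\ref{lem7}, which strictly increases the $Q$-index and reduces the problem to the $k$-independent graph $G'=C_{6}\circ(n_{1},n_{2},1,1,1,1)$ with a fixed $6\times 6$ equitable quotient. It then computes the characteristic polynomial $f(n_{1},n_{2},x)$ of that quotient explicitly, factors the difference $f(n_{1},n_{2},x)-f(n_{1}+1,n_{2}-1,x)$, and checks its sign on the interval $x\ge n-2k+1-\tfrac{3}{2(n-2k+4)}$ to conclude that increasing $|n_{1}-n_{2}|$ increases $q(G')$. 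This pushes the comparison down to the single graph $C_{6}\circ(n-2k-3,2,1,1,1,1)$, whose $Q$-index is then bounded above by the same threshold via a direct polynomial evaluation, contradicting Lemma~\ref{lem6}. The two key moves you are missing are the path-contraction reduction (which eliminates the $k$-dependence of the quotient) and the polynomial-difference comparison (which replaces your undetermined Rayleigh sign analysis with an explicit factorisation).
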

\begin{proof}
We prove the result by contradiction. Without loss of generality, we may assume that $G\cong C_{2k+3}\circ(n_{1},n_{2},1,\ldots,1)$ with $n_{1}\geq n_{2}\geq2$.
Let $V_{3}=\{u_{3}\}$ and $V_{4}=\{u_{4}\}$. Note that $n_{1}+n_{2}=n-2k-1.$ Then we have $n-2k\geq5$.
Let $G'$ be the graph obtained from $G$ by contracting the internal path $P_{2k-1}=v_{1}v_{2}\cdots v_{2k-1}$ as an edge $v_{1}v_{2k-1}$,
that is, $G'\cong C_{6}\circ(n_{1},n_{2},1,1,1,1)$.
According to Lemma \ref{lem7}, one can see that
\begin{eqnarray}\label{eq g}
q(G)<q(G').
\end{eqnarray}
For the vertices of $G'$, we consider the partition
$$\Pi: V(G')=V_{1}\cup V_{2}\cup\{u_{3}\}\cup\{u_{4}\}\cup\{v_{2k-1}\}\cup\{v_{1}\}.$$
Then the quotient matrix $B_{\Pi}$ of $Q(G')$ is
$$B_{\Pi}=
\begin{pmatrix}
1+n_{2}&n_{2}&0&0&0&1\\
n_{1}&1+n_{1}&1&0&0&0\\
0&n_{2}&1+n_{2}&1&0&0\\
0&0&1&2&1&0\\
0&0&0&1&2&1\\
n_{1}&0&0&0&1&1+n_{1}
\end{pmatrix}
.$$
By calculation, the characteristic polynomial $f(n_{1},n_{2},x)$ of the quotient matrix $B_{\Pi}$ is
\begin{eqnarray*}
f(n_{1},n_{2},x)&=&x^{6}-2(n_{1}+n_{2}+4)x^{5}+\Big((n_{1}+n_{2})^{2}+13(n_{1}+n_{2})+n_{1}n_{2}+23\Big)x^{4}\\
&&-\Big(5(n_{1}+n_{2})^{2}+6n_{1}n_{2}+(n_{1}n_{2}+27)(n_{1}+n_{2})+30\Big)x^{3}\\
&&+\Big(6(n_{1}+n_{2})^{2}+13n_{1}n_{2}+(4n_{1}n_{2}+21)(n_{1}+n_{2})+18\Big)x^{2}\\
&&-\Big((n_{1}+n_{2})^{2}+12n_{1}n_{2}+(3n_{1}n_{2}+5)(n_{1}+n_{2})+4\Big)x.
\end{eqnarray*}
Since the partition $\Pi$ is equitable, it follows from Lemma \ref{lem2} that
$q(G')$ is equal to the largest root of $f(n_{1},n_{2},x)=0.$
Let $G''=C_{6}\circ(n_{1}+1,n_{2}-1,1,1,1,1)$. Then $q(G'')$ is equal to the largest root of $f(n_{1}+1,n_{2}-1,x)=0$.
Note that
\begin{eqnarray*}
&&f(n_{1}+1,n_{2}-1,x)\\
&=&x^{6}-2(n_{1}+n_{2}+4)x^{5}+\Big((n_{1}+n_{2})^{2}+13(n_{1}+n_{2})+(n_{1}+1)(n_{2}-1)+23\Big)x^{4}\\
&&-\Big(5(n_{1}+n_{2})^{2}+6(n_{1}+1)(n_{2}-1)+((n_{1}+1)(n_{2}-1)+27)(n_{1}+n_{2})+30\Big)x^{3}\\
&&+\Big(6(n_{1}+n_{2})^{2}+13(n_{1}+1)(n_{2}-1)+(4(n_{1}+1)(n_{2}-1)+21)(n_{1}+n_{2})+18\Big)x^{2}\\
&&-\Big((n_{1}+n_{2})^{2}+12(n_{1}+1)(n_{2}-1)+(3(n_{1}+1)(n_{2}-1)+5)(n_{1}+n_{2})+4\Big)x.
\end{eqnarray*}
Since $n_{1}+n_{2}=n-2k-1,$ we obtain that
\begin{eqnarray*}
&&f(n_{1},n_{2},x)-f(n_{1}+1,n_{2}-1,x)\\
&=&(2n_{1}+2k+2-n)x(x-3)(x^{2}-(n-2k+2)x+(n-2k+3)).
\end{eqnarray*}
It is easy to see that $2n_{1}+2k+2-n>0$ as $n_{1}\geq n_{2}$ and  $n_{1}+n_{2}=n-2k-1$.
If $x\geq n-2k+1-\frac{3}{2(n-2k+4)}$, then we have $x-3>0$ and $x^{2}-(n-2k+2)x+(n-2k+3)>0$.
This implies that $f(n_{1},n_{2},x)-f(n_{1}+1,n_{2}-1,x)>0$ for $x\geq n-2k+1-\frac{3}{2(n-2k+4)}$.
Recall that $q(G')>q(G)>n-2k+1-\frac{3}{2(n-2k+4)}$. Then we have $f(n_{1},n_{2},x)>f(n_{1}+1,n_{2}-1,x)$ for any $x\geq q(G').$
Hence one can obtain that $q(G'')>q(G')$, that is,
$$q(C_{6}\circ(n_{1}+1,n_{2}-1,1,1,1,1))>q(C_{6}\circ(n_{1},n_{2},1,1,1,1)).$$
Note that $n_{2}\geq 2.$ By using the above operation repeatedly, we have
\begin{eqnarray}\label{eq_e}
q(G')=q(C_{6}\circ(n_{1},n_{2},1,1,1,1))\leq q(C_{6}\circ(n_{1}+n_{2}-2,2,1,1,1,1))
\end{eqnarray}
Since $n_{1}+n_{2}=n-2k-1$, it follows that
$$C_{6}\circ(n_{1}+n_{2}-2,2,1,1,1,1)\cong C_{6}\circ(n-2k-3,2,1,1,1,1).$$
Clearly, $q(C_{6}\circ(n-2k-3,2,1,1,1,1))$ is the largest root of $f(n-2k-3,2,x)=0$.
Note that
\begin{eqnarray*}
  f(n-2k-3,2,x)&=&x^{6}-2(n-2k+3)x^{5}+(n^{2}+13n+4k^{2}-4kn-26k+5)x^{4}\\
  &&-(7n^{2}+21n+28k^{2}-28kn-42k-22)x^{3}\\
  &&+(14n^{2}+3n+56k^{2}-56kn-6k-51)x^{2}\\
  &&-(7n^{2}+3n+28k^{2}-28kn-6k-54)x,
  \end{eqnarray*}
  and
  \begin{eqnarray*}
  f'(n-2k-3,2,x)&=&6x^{5}-10(n-2k+3)x^{4}+4(n^{2}+13n+4k^{2}-4kn-26k+5)x^{3}\\
  &&-3(7n^{2}+21n+28k^{2}-28kn-42k-22)x^{2}\\
  &&+2(14n^{2}+3n+56k^{2}-56kn-6k-51)x\\
  &&-(7n^{2}+3n+28k^{2}-28kn-6k-54).
  \end{eqnarray*}
Note that $n\geq 2k+5.$ It is easy to check that $f(n-2k-3,2,n-2k+1-\frac{3}{2(n-2k+4)})>0$.
Moreover, if $x\geq n-2k+1-\frac{3}{2(n-2k+4)}$, then $f'(n-2k-3,2,x)>0$.
It follows that $f(n-2k-3,2,x)>0$ for $x\geq n-2k+1-\frac{3}{2(n-2k+4)}$, which implies that
\begin{eqnarray}\label{eq_f}
q(C_{6}\circ(n-2k-3,2,1,1,1,1))<n-2k+1-\frac{3}{2(n-2k+4)}.
\end{eqnarray}
Combining (\ref{eq g}), (\ref{eq_e}) and (\ref{eq_f}), it follows that
$$q(G)<q(G')<n-2k+1-\frac{3}{2(n-2k+4)}<q(C_{2k+3}\circ(n-2k-2,1,\ldots,1)),$$
which contradicts the maximality of $q(G)$. This completes the proof.
\end{proof}

Now we are ready to present the proof of Theorem \ref{main1}.

\medskip
\noindent\textbf{Proof of Theorem \ref{main1}.}
Suppose that $G$ is the graph which attains the maximum $Q$-index among all the $\{C_{3},C_{5},\ldots,$ $C_{2k+1}\}$-free non-bipartite graphs of order $n$.
By Lemma \ref{lem10}, we have $G\cong C_{2k+3}\circ(n_{1},n_{2},n_{3},n_{4}, 1,\ldots,1).$
Suppose that the vertices of $G$ are labelled as indicated in the right part of Fig. \ref{4}.
Let $X=(x_{v})_{v\in V (G)}$ be the Perron vector of $Q(G)$.
Assume that $u^{\ast}$ is a vertex of $G$ such that
$$d(u^{\ast})+m(u^{\ast})=\max\{d(u)+m(u):u\in V(G)\}.$$
By Lemma \ref{lem4}, we have $q(G)\leq d(u^{\ast})+m(u^{\ast})$.
According to the vertex $u^{\ast}$, we divide the proof into the following cases.

\vspace{1.5mm}
\noindent{\bf Case 1.} $u^{\ast}\in \{v_{j}:3\leq j\leq2k-3\}$.
\vspace{0.5mm}

Since $d(u^{\ast})=m(u^{\ast})=2$, we have $q(G)\leq d(u^{\ast})+m(u^{\ast})=4$. On the other hand, by Lemma \ref{lem6}, it follows that $q(G)>n-2k+1-\frac{3}{2(n-2k+4)}>n-2k$.
Combining these two inequalities, we obtain that $n-2k<4$, which implies that $n\leq 2k+3$.
Moreover, since $n\geq 2k+3$, it follows that $n=2k+3$, and hence $G\cong C_{2k+3}\circ(1,1,\ldots,1).$

\vspace{1.5mm}
\noindent{\bf Case 2.} $u^{\ast}\in\{v_{2},v_{2k-2}\}$.
\vspace{0.5mm}

Without loss of generality, we may assume that $u^{\ast}=v_{2}$. It follows that
\begin{eqnarray}\label{eq2}
q(G)\leq d(u^{\ast})+m(u^{\ast})=2+\frac{3+n_{1}}{2}=\frac{7+n_{1}}{2}.
\end{eqnarray}
Lemma \ref{lem6} shows that $q(G)>n-2k+1-\frac{3}{2(n-2k+4)}.$ Combining (\ref{eq2}),
we have $$n-2k<q(G)\leq d(u^{\ast})+m(u^{\ast})\leq \frac{7+n_{1}}{2}.$$
Hence $n_{1}>2n-4k-7$.
Recall that $\sum_{i=1}^{4}n_{i}=n-2k+1$ and $n_{i}\geq1$ for $1\leq i\leq4$.
Then we have
\begin{eqnarray*}
3\leq n_{2}+n_{3}+n_{4}=n-2k+1-n_{1}<n-2k+1-(2n-4k-7)=2k+8-n,
\end{eqnarray*}
which implies that $n\leq2k+4$. Since $n\geq2k+3$, it follows that $n=2k+3$ or $n=2k+4$.
If $n=2k+3$, then $G\cong C_{2k+3}\circ(1,1,\ldots,1).$
If $n=2k+4$, then $n_{1}=2$ and $G\cong C_{2k+3}\circ(2,1,\ldots,1)$.

\vspace{1.5mm}
\noindent{\bf Case 3.} $u^{\ast}\in\{v_{1},v_{2k-1}\}$.
\vspace{0.5mm}

Assume, without loss of generality, that $u^{\ast} = v_{1}.$ In this case, we have $d(u^{\ast})=n_{1}+1$ and
$$m(u^{\ast})=\frac{2+n_{1}(1+n_{2})}{n_{1}+1}=n_{2}+1-\frac{n_{2}-1}{n_{1}+1}.$$
It follows that
\begin{eqnarray}\label{eq3.0}
  d(u^{\ast})+m(u^{\ast})=n_{1}+n_{2}+2-\frac{n_{2}-1}{n_{1}+1}=n-2k+3-(n_{3}+n_{4})-\frac{n_{2}-1}{n_{1}+1}.
\end{eqnarray}
By Lemma \ref{lem6} and (\ref{eq3.0}), we have
\begin{eqnarray*}
n-2k<q(G)\leq d(u^{\ast})+m(u^{\ast})=n-2k+3-(n_{3}+n_{4})-\frac{n_{2}-1}{n_{1}+1}.
\end{eqnarray*}
This implies that
\begin{eqnarray*}
2\leq n_{3}+n_{4}<3-\frac{n_{2}-1}{n_{1}+1}\leq3,
\end{eqnarray*}
and hence $n_{3}=n_{4}=1$. Therefore, $G\cong C_{2k+3}\circ(n_{1},n_{2},1,\ldots,1).$
Without loss of generality, we may assume that $n_{1}\geq n_{2}$.
By Lemma \ref{lem11}, we deduce that $n_{2}=1,$ and hence $G\cong C_{2k+3}\circ(n-2k-2,1,1,\ldots,1).$

\vspace{1.5mm}
\noindent{\bf Case 4.} $u^{\ast}\in V_{1}\cup V_{4}.$
\vspace{0.5mm}

By symmetry, we may assume that $u^{\ast}\in V_{1}.$ Clearly, $d(u^{\ast})=n_{2}+1$ and
$$m(u^{\ast})=\frac{1+n_{1}+n_{2}(n_{1}+n_{3})}{n_{2}+1}=n_{1}+\frac{1+n_{2}n_{3}}{n_{2}+1}=n_{1}+n_{3}-\frac{n_{3}-1}{n_{2}+1}.$$
Since $n_{1}+n_{2}+n_{3}+n_{4}=n-2k+1$, we obtain that
\begin{eqnarray}\label{eq3}
d(u^{\ast})+m(u^{\ast})=n-2k+2-n_{4}-\frac{n_{3}-1}{n_{2}+1}.
\end{eqnarray}
By Lemma \ref{lem6} and (\ref{eq3}), we have
\begin{eqnarray*}
n-2k<q(G)\leq d(u^{\ast})+m(u^{\ast})=n-2k+2-n_{4}-\frac{n_{3}-1}{n_{2}+1}.
\end{eqnarray*}
It follows that
\begin{eqnarray*}
1\leq n_{4}<2-\frac{n_{3}-1}{n_{2}+1}\leq2,
\end{eqnarray*}
and hence $n_{4}=1.$ Suppose that $V_{4}=\{u_{4}\}$. Without loss of generality, we assume that $x_{v_{1}}\geq x_{u_{4}}.$
Note that $n_{3}\geq1.$ Assume that $n_{3}\geq 2$. Choose a vertex $z\in V_{3}.$ Let $G'\cong G-zu_{4}+zv_{1}.$
Obviously, $G'$ is a non-bipartite $\{C_{3},C_{5},\ldots,C_{2k+1}\}$-free graph.
However, by Lemma \ref{lem3}, we have $q(G')>q(G),$ which contradicts the maximality of $q(G)$. Hence $n_{3}=1$, that is, $G\cong C_{2k+3}\circ(n_1,n_{2},1,\ldots,1)$.
Without loss of generality, we assume that $n_1\geq n_{2}$. By Lemma \ref{lem11}, we must have $n_{2}=1$. Hence $G\cong C_{2k+3}\circ(n-2k-2,1,\ldots,1)$.

\vspace{1.5mm}
\noindent{\bf Case 5.} $u^{\ast}\in V_{2}\cup V_{3}.$
\vspace{0.5mm}

Suppose, without loss of generality, that $u^{\ast}\in V_{2}$. Then we have $d(u^{\ast})=n_{1}+n_{3}$ and
$$m(u^{\ast})=\frac{n_{1}(1+n_{2})+n_{3}(n_{2}+n_{4})}{n_{1}+n_{3}}=n_{2}+n_{4}-\frac{n_{1}(n_{4}-1)}{n_{1}+n_{3}}.$$
Note that $n_{1}+n_{2}+n_{3}+n_{4}=n-2k+1$. It follows that
\begin{eqnarray}\label{eq4}
d(u^{\ast})+m(u^{\ast})=n-2k+1-\frac{n_{1}(n_{4}-1)}{n_{1}+n_{3}}.
\end{eqnarray}
We claim that $\min\{n_{1},n_{2},n_{4}\}=1$.
Otherwise, it follows from (\ref{eq4}) that
\begin{eqnarray}\label{eq11}
d(u^{\ast})+m(u^{\ast})\leq n-2k+1-\frac{n_{1}}{n_{1}+n_{3}}\leq n-2k+1-\frac{2}{n-2k-3}.
\end{eqnarray}
By Lemma \ref{lem6} and (\ref{eq11}), we have
$$n-2k+1-\frac{3}{2(n-2k+4)}<q(G)\leq n-2k+1-\frac{2}{n-2k-3},$$ a contradiction.
Therefore, $\min\{n_{1},n_{2},n_{4}\}=1$.
In the following, we divide the proof into three subcases.

\vspace{1.5mm}
\noindent{\bf Subcase 5.1.} $n_{1}=1.$
\vspace{0.5mm}

Let $V_{1}=\{u_{1}\}$. Without loss of generality, we may assume that $x_{u_{1}}\geq x_{v_{2k-1}}.$
If $n_{4}\geq 2$, then we choose a vertex $z\in V_{4}.$ Let $G'=G-zv_{2k-1}+zu_{1}.$
Obviously, $G'$ is a non-bipartite $\{C_{3},C_{5},\ldots,C_{2k+1}\}$-free graph.
However, by Lemma \ref{lem3}, we have $q(G')>q(G),$ which contradicts the maximality of $q(G)$.
Hence $n_{4}=1$, that is, $G\cong C_{2k+3}\circ(1,n_{2},n_{3},1,\ldots,1)$.
According to Lemma \ref{lem11}, we have either $n_{2}=1$ or $n_{3}=1$.
This implies that $G\cong C_{2k+3}\circ(n-2k-2,1,\ldots,1)$.

\vspace{1.5mm}
\noindent{\bf Subcase 5.2.} $n_{2}=1.$
\vspace{0.5mm}

In this case, we claim that $n_{1}=1$ or $n_{4}=1$. Otherwise, it follows from (\ref{eq4}) that
\begin{eqnarray*}
d(u^{\ast})+m(u^{\ast})\leq n-2k+1-\frac{n_{1}(n_{4}-1)}{n_{1}+n_{3}}\leq n-2k+1-\frac{2}{n-2k-2}.
\end{eqnarray*}
However, by Lemmas \ref{lem4} and \ref{lem6}, we obtain that
$$q(G)\leq d(u^{\ast})+m(u^{\ast})\leq n-2k+1-\frac{2}{n-2k-2}<q(C_{2k+3}\circ(n-2k-2,1,\ldots,1))$$
which contradicts the maximality of $q(G)$.

If $n_{1}=1$, then $G\cong C_{2k+3}\circ(1,1,n_{3},n_{4},\ldots,1)$. According to Lemma \ref{lem11},
we have either $n_{3}=1$ or $n_{4}=1.$
Hence $G\cong C_{2k+3}\circ(n-2k-2,1,\ldots,1)$.
Suppose that $n_{4}=1$ and $V_{4}=\{u_{4}\}$. Without loss of generality, we assume that $x_{v_{1}}\geq x_{u_{4}}.$
If $n_{3}\geq 2$, then we choose a vertex $z\in V_{3}.$ Let $G'=G-zu_{4}+zv_{1}.$
Obviously, $G'$ is a non-bipartite $\{C_{3},C_{5},\ldots,C_{2k+1}\}$-free graph.
However, by Lemma \ref{lem3}, we have $q(G')>q(G),$ a contradiction. Therefore, $n_{3}=1$.
This implies that $G\cong C_{2k+3}\circ(n-2k-2,1,\ldots,1)$.

\vspace{1.5mm}
\noindent{\bf Subcase 5.3.} $n_{4}=1.$
\vspace{0.5mm}

Let $V_{4}=\{u_{4}\}$. Now we may assume that $x_{v_{1}}\geq x_{u_{4}}.$
If $n_{3}\geq 2$, then we choose a vertex $z\in V_{3}.$ Let $G'=G-zu_{4}+zv_{1}.$
Obviously, $G'$ is a non-bipartite $\{C_{3},C_{5},\ldots,C_{2k+1}\}$-free graph.
However, by Lemma \ref{lem3}, we have $q(G')>q(G),$ which contradicts the maximality of $q(G)$.
Hence $n_{3}=1$, that is, $G\cong C_{2k+3}\circ(n_{1},n_{2},1,1,\ldots,1)$.
According to Lemma \ref{lem11}, we have either $n_{1}=1$ or $n_{2}=1$.
This implies that $G\cong C_{2k+3}\circ(n-2k-2,1,\ldots,1)$.
\hspace*{\fill}$\Box$

\section{Proof of Theorem \ref{main2}}

Now, we are in a position to give the proof of Theorem \ref{main2}.

\medskip
\noindent  \textbf{Proof of Theorem \ref{main2}}.
Suppose that $G$ attains the maximum $Q$-index among all non-bipartite $\{C_{3},C_{5},\ldots,C_{2k+1}\}$-free graphs of size $m.$
Note that a non-bipartite graph $G$ of size $m\leq2k+3$ must contain an odd cycle of length $2i+1$ unless $G\cong C_{2k+3}$, where $1\leq i\leq k$.
In the following, it suffices to consider the case $m\geq2k+4.$
Our goal is to show that $G\cong C_{2k+3}\bullet K_{1,m-2k-3}$.
First we claim that $G$ is connected.
Otherwise, suppose that $G$ contains $k$ connected components $G_{1},G_{2},\ldots,G_{k}$,
where $q(G)=q(G_{i_{0}})$ for some $i_{0}\in \{1,2,\ldots,k\}$.
Select a vertex $u_{i}\in V(G_{i})$ for each $i\in \{1,2,\ldots,k\},$ and let $G'$ be the graph obtained from
$G$ by identifying vertices $u_{1},u_{2},\ldots,u_{k}.$
It is easy to see that $G'$ is also a non-bipartite $\{C_{3},C_{5},\ldots,C_{2k+1}\}$-free graph of size $m$.
Moreover, $G'$ contains $G_{i_{0}}$ as a proper subgraph, and hence $q(G')>q(G_{i_{0}})=q(G)$, contradicting the maximality of $q(G).$

Note that $K_{1,m-2k-1}$ is a proper subgraph of $C_{2k+3}\bullet K_{1,m-2k-3}$.
Then we have $$q(C_{2k+3}\bullet K_{1,m-2k-3})>q(K_{1,m-2k-1})=m-2k.$$
Combining the fact $q(G)\geq q(C_{2k+3}\bullet K_{1,m-2k-3})$, it follows that
\begin{eqnarray}\label{eq10}
q(G)>m-2k.
\end{eqnarray}

Suppose that $m\leq4k+3$. Let $C$ be the shortest odd cycle of $G$, then $|C|\geq2k+3$.
If $|C|\geq2k+5$, then $d(u)+d(v)\leq m-2k-1$ for each edge $uv\in E(G)$. By Lemma \ref{lem5},
it follows that $q(G)\leq m-2k-1$, contradicting (\ref{eq10}).
Therefore, we obtain that $|C|=2k+3$.
If $G\ncong C_{2k+3}\bullet K_{1,m-2k-3}$, then either $G\cong G_{1}$ (see Fig. \ref{5}) or $d(u)+d(v)\leq m-2k$ for each edge $uv\in E(G)$.
For the latter case, by Lemma \ref{lem5}, we obtain that $q(G)\leq m-2k$, contradicting (\ref{eq10}).
Suppose that $G\cong G_{1}$. Let $X=(x_{v})_{v\in V (G)}$ be the Perron vector of $Q(G)$.
We may assume that $x_{u_{1}}\geq x_{u_{2}}$. Let $N_{j}$ be the set of neighbours of $u_{j}$ in $V(G)\backslash V(C)$, where $j=1,2$ (see Fig. \ref{5}). According to Lemma \ref{lem3}, we have $q(G-u_{2}v+u_{1}v)>q(G)$ for any $v\in N_{2}$, a contradiction.
This implies that $G\cong C_{2k+3}\bullet K_{1,m-2k-3}$.
\input{f5.TpX}

Assume now that $m\geq 4k+4$.
Let $\Delta$ be the maximum degree of $G.$ If $\Delta\geq m-2k,$ then either $G$ is bipartite or $G$ contains an odd cycle length of $2i+1$
for $1\leq i\leq k$, a contradiction.
Hence we have $\Delta \leq m-2k-1.$
Let $u^{\ast}$ be a vertex of $G$ such that
$$d(u^{\ast})+m(u^{\ast})=\max\{d(u)+m(u): u\in V(G)\}.$$
By Lemma \ref{lem4} and (\ref{eq10}), we have
\begin{eqnarray}\label{eq7}
  d(u^{\ast})+m(u^{\ast})=d(u^{\ast})+\frac{\sum_{v\in N(u^{\ast})}d(v)}{d(u^{\ast})}\geq q(G)>m-2k.
\end{eqnarray}
Now we divide the proof into three cases according to different values of $d(u^{\ast}).$

\vspace{1.5mm}
\noindent{\bf Case 1.} $d(u^{\ast})=1$.
\vspace{0.5mm}

Let $v$ be the only neighbour of $u^{\ast}.$ It follows from (\ref{eq7}) that $1+d(v)>m-2k$, and hence $d(v)>m-2k-1,$ which contradicts the fact $\Delta\leq m-2k-1$.

\vspace{1.5mm}
\noindent{\bf Case 2.} $2\leq d(u^{\ast})\leq m-2k-2$.
\vspace{0.5mm}

Let $N(u^{\ast})$ be the set of neighbours of $u^{\ast}$ in $G$. Since $G$ is $C_{3}$-free, $N(u^{\ast})$ is an independent set.
Hence we have
\begin{eqnarray*}
\sum_{v\in N(u^{\ast})}d(v)=|E\big(N(u^{\ast}), V(G)\backslash N(u^{\ast})\big)|\leq m,
\end{eqnarray*}
where $E\big(N(u^{\ast}), V(G)\backslash N(u^{\ast})\big)$ denotes the set of edges between $N(u^{\ast})$ and $V(G)\backslash N(u^{\ast})$.
It follows that
\begin{eqnarray*}
d(u^{\ast})+m(u^{\ast})=d(u^{\ast})+\frac{\sum_{v\in N(u^{\ast})}d(v)}{d(u^{\ast})}\leq d(u^{\ast})+\frac{m}{d(u^{\ast})}.
\end{eqnarray*}
Note that the function $f(x)=x+\frac{m}{x}$ is convex for $x>0.$
Hence the maximum value of $d(u^{\ast})+\frac{m}{d(u^{\ast})}$ is attained at $d_{u^{\ast}}=2$ or $d_{u^{\ast}}=m-2k-2$.
Since $m\geq4k+4$, we obtain that
\begin{eqnarray*}
d(u^{\ast})+\frac{m}{d(u^{\ast})}\leq\max\{2+\frac{m}{2}, m-2k-2+\frac{m}{m-2k-2}\}\leq m-2k,
\end{eqnarray*}
which contradicts (\ref{eq7}).

\vspace{1.5mm}
\noindent{\bf Case 3.} $d(u^{\ast})=m-2k-1$.
\vspace{0.5mm}

Let $T=V(G)\backslash (N(u^{\ast})\cup\{u^{\ast}\})$, and let $e(T)$ denote the number of edges in the induced subgraph $G[T]$.
Since $G$ is $C_{3}$-free, $N(u^{\ast})$ is an independent set.
Note that $G$ is a connected non-bipartite graph. Therefore, $e(N(u^{\ast}), T)\geq1$ and $e(T)\geq1$.
Since $G$ is $\{C_{3},C_{5},\ldots,C_{2k+1}\}$-free and non-bipartite, it follows that $e(T)=2k-1$ and $G[T]$ is a path $P_{2k}=v_{1}v_{2}\cdots v_{2k}$.
Moreover, one can see that there are exactly two edges between $N(u^{\ast})$ and $T$
such that these two edges are independent and incident to $v_{1}$ and $v_{2k}$, respectively.
This implies that $G\cong C_{2k+3}\bullet K_{1,m-2k-3}$. The proof is completed.\hspace*{\fill}$\Box$
\medskip

\section*{Acknowledgements}

The research of Ruifang Liu is supported by National Natural Science Foundation of China (Nos. 11971445 and 12171440) and Natural Science Foundation of Henan (No. 202300410377). The research of Jie Xue is supported by National Natural Science Foundation of China (No. 12001498)
and China Postdoctoral Science Foundation (No. 2022TQ0303).

\end{document}